\DeclareMathAlphabet{\altmathcal}{OMS}{cmsy}{m}{n}
\newcolumntype{C}{>{\centering\arraybackslash}X} 
\newtheorem*{theorem*}{Theorem}
\newtheorem{theorem}{Theorem}[section]
\newtheorem{theoremx}{Theorem}
\newtheorem{lemma}[theorem]{Lemma}
\newtheorem{corollary}[theorem]{Corollary}
\newtheorem{proposition}[theorem]{Proposition}
\newtheorem*{proposition*}{Proposition}
\theoremstyle{definition}
\newtheorem{definition}[theorem]{Definition}
\theoremstyle{remark}
\newtheorem{remark}[theorem]{Remark}
\newcommand{\defin}{\vcentcolon =}
\newcommand{\C}{\mathbb{C}}
\newcommand{\R}{\mathbb{R}}
\newcommand{\N}{\mathbb{N}}
\newcommand{\I}{I}
\newcommand{\II}{I \!\! I}
\newcommand{\Hyp}{\mathbb{H}}
\newcommand{\Teich}{\altmathcal{T}}
\newcommand{\length}{\ell}
\newcommand{\CC}{C}
\newcommand{\neigh}{N}
\newcommand{\Dlie}{\altmathcal{L}}
\newcommand{\WP}{\textit{WP}}
\newcommand{\id}{\textit{id}}
\newcommand{\1}{\mathds{1}}
\newcommand{\MesLam}{\altmathcal{ML}}
\newcommand{\QD}{\altmathcal{QD}}
\newcommand{\mappa}[3]{#1 \colon #2 \rightarrow #3}
\newcommand{\hsk}{\hskip0pt}
\DeclarePairedDelimiterX{\scal}[2]{\langle}{\rangle}{#1, #2}
\DeclarePairedDelimiterX{\scall}[2]{(}{)}{#1, #2}
\DeclareMathOperator{\arctanh}{arctanh}
\DeclareMathOperator{\Area}{Area}
\DeclareMathOperator{\ext}{ext}
\DeclareMathOperator{\grd}{grad}
\DeclareMathOperator{\Ric}{Ric}
\DeclareMathOperator{\divr}{div}
\title[The infimum of the dual volume of convex co-compact \texorpdfstring{$3$}{3}-manifolds]{The infimum of the dual volume of convex co-compact hyperbolic \texorpdfstring{$3$}{3}-manifolds}
\author{Filippo Mazzoli}
\date{\today}
\address{Department of Mathematics \\
    University of Virginia \\
    141 Cabell Drive \\
    Charlottesville VA 22904 \\
    U. S. A.}
\keywords{hyperbolic geometry, dual volume, Kleinian groups, convex core}
\subjclass[2010]{Primary: 30F40; Secondary: 57M50, 52A15}
\email{filippomazzoli@me.com}
\begin{document}
    
\begin{abstract}
    \noindent We show that the infimum of the dual volume of the convex core of a convex co-compact hyperbolic $3$-\hsk manifold with incompressible boundary coincides with the infimum of the Riemannian volume of its convex core, as we vary the geometry by quasi-\hsk isometric deformations. We deduce a linear lower bound of the volume of the convex core of a quasi-\hsk Fuchsian manifold in terms of the length of its bending measured lamination, with optimal multiplicative constant.
\end{abstract}

\maketitle

\section*{Introduction}

Let $M$ be a complete hyperbolic $3$-\hsk manifold, and let $\CC M$ be its convex core, namely the smallest non-\hsk empty convex subset of $M$. Then $M$ is said to be \emph{convex co-\hsk compact} if $\CC M$ is a compact subset. The notion of dual volume of the convex core $V_\CC^*(M)$ arises from the polarity correspondence between the hyperbolic and the de Sitter spaces (see \cite[Section 1]{schlenker2002hypersurfaces}, \cite{mazzoli2020thesis}). If $M$ is a convex co-\hsk compact hyperbolic $3$-\hsk manifold, then $V_\CC^*(M)$ coincides with $V_\CC(M) - \frac{1}{2} \length_m(\mu)$, where $V_\CC(M)$ stands for the usual Riemannian volume of the convex core, and $\length_m(\mu)$ denotes the length of the bending measured lamination $\mu$ with respect to the hyperbolic metric $m$ of the boundary of the convex core of $M$. 
The aim of this paper is to study the infimum of $V_\CC^*$, considered as a function over the space $\QD(M)$ of quasi-\hsk isometric deformations of a given convex co-\hsk compact hyperbolic $3$-\hsk manifold $M$ with incompressible boundary. In particular, we will prove

\begin{theoremx} \label{THM:INF_DUAL_VOLUME}
	For every convex co-\hsk compact hyperbolic $3$-\hsk manifold $M$ with incompressible boundary we have
	\[
	\inf_{M' \in \QD(M)} V_\CC^*(M') = \inf_{M' \in \QD(M')} V_\CC(M') .
	\]
	Moreover, $V_\CC^*(M') = V_\CC(M')$ if and only if the boundary of the convex core of $M'$ is totally geodesic.
\end{theoremx}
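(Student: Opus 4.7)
The inequality $\inf V_\CC^* \leq \inf V_\CC$ and the ``moreover'' statement both follow directly from the identity
\[
V_\CC^*(M') = V_\CC(M') - \tfrac{1}{2}\length_m(\mu)
\]
recalled in the introduction, together with the non-negativity of $\length_m(\mu)$: we have $\length_m(\mu) = 0$ if and only if $\mu$ is the zero measured lamination, equivalently if and only if the boundary of $\CC M'$ is totally geodesic.

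The substantive content is the reverse inequality $\inf V_\CC \leq \inf V_\CC^*$. My plan is to show that, for every $M' \in \QD(M)$ and every $\epsilon > 0$, there exists $M'' \in \QD(M)$ with $V_\CC(M'') \leq V_\CC^*(M') + \epsilon$. The candidate $M''$ is obtained by continuously shrinking the bending measured lamination of $M'$ toward zero.

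Concretely, given $M' \in \QD(M)$ with boundary hyperbolic metric $m$ and bending lamination $\mu$, the first step is to realize a continuous family $\{M'_t\}_{t \in (0,1]} \subset \QD(M)$ with $M'_1 = M'$ and bending lamination $t\mu$ at $M'_t$. The existence of such a path follows from Bonahon--Otal's bending realization theorem, available thanks to the incompressibility of $\partial M$: admissibility of $\mu$ implies admissibility of $t\mu$ for every $t \in (0,1]$. The second and key step is to establish the monotonicity $V_\CC^*(M'_t) \leq V_\CC^*(M'_1) = V_\CC^*(M')$ for all $t \in (0,1]$ by integrating the Bonahon--Schl\"afli variational formula for the dual volume along this path. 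Under that formula, the first-order variation of $V_\CC^*$ pairs the variation of the boundary hyperbolic metric $m_t$ against the frozen bending lamination $\mu_t$, and the rescaling ansatz $\mu_t = t\mu$ is precisely designed to make this pairing have the correct sign. With monotonicity in hand, the identity
\[
V_\CC(M'_t) = V_\CC^*(M'_t) + \tfrac{t}{2}\length_{m_t}(\mu)
\]
combined with the boundedness of $\length_{m_t}(\mu)$ along the path (by continuity of the length functional on measured laminations) forces $V_\CC(M'_t) \leq V_\CC^*(M') + \epsilon$ for $t$ sufficiently small, giving the desired $M'' = M'_t$.

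The main obstacle I anticipate is the monotonicity step. The Bonahon--Schl\"afli formula holds in a weak sense for general measured laminations, and integrating it along a path on which both $m_t$ and $\mu_t$ vary requires handling transverse cocycles and low regularity carefully. The incompressibility hypothesis on $\partial M$ is used throughout, both to invoke Bonahon--Otal realization and to ensure the continuity of the length functional and of $V_\CC^*$ along the rescaling path.
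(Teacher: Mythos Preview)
The monotonicity step is the real gap, and not for the regularity reasons you flag. Along your path the dual Bonahon--Schl\"afli formula gives
\[
\frac{d}{dt} V_\CC^*(M'_t) = -\tfrac{1}{2}\,\dd L_{\mu_t}(\dot m_t) = -\tfrac{t}{2}\,\dd L_{\mu}(\dot m_t),
\]
so what you need is $\dd L_\mu(\dot m_t) \leq 0$. The rescaling $\mu_t = t\mu$ only contributes the harmless factor $t$; it does nothing to pin down the sign of $\dd L_\mu(\dot m_t)$. That sign depends on the direction $\dot m_t$ in which the boundary hyperbolic metric moves when you scale the bending, and this direction is determined implicitly by the Bonahon--Otal parametrization, not by anything you have chosen. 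There is no known mechanism forcing $L_\mu(m_t)$ to be monotone along this path. Equivalently, integrating the (non-dual) Bonahon--Schl\"afli formula, your claim unwinds to $\ell_{m_1}(\mu) \leq \int_0^1 \ell_{m_s}(\mu)\,ds$, which has no obvious reason to hold; near the Fuchsian locus both sides agree to first order (this is exactly the optimality discussion in Corollary~\ref{cor:optimal_constant}), so the inequality is borderline and cannot be obtained by a soft argument.

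The paper avoids this by \emph{not} working on the convex core boundary at all. Precisely because the map $T_{-1}$ from $\QD(M)$ to $\Teich(\partial\CC M)$ is only conjecturally a diffeomorphism, the paper passes to the $k$-surfaces $\partial M_k$ with $k\in(-1,0)$, where the analogous map $T_k$ \emph{is} a diffeomorphism (Theorem~\ref{thm:ksurfaces_homeo}). It then runs the Weil--Petersson gradient flow of $V_k^*$ --- a flow that genuinely decreases $V_k^*$ by construction, no sign miracle required --- and uses the uniform analytic estimates of Sections~\ref{sec:some_useful_estimates}--\ref{sec:gradient_dual_volume} (ultimately Proposition~\ref{prop:bounds_mean_curvature}) to show the flow exists for all time and drives the mean-curvature integral toward its totally-geodesic value. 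The conclusion for $V_\CC^*$ is recovered only at the very end by letting $k\to -1^+$. Your bending-rescaling path is natural, but making it work would amount to proving a new monotonicity theorem of independent difficulty.
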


When $M$ is a quasi-\hsk Fuchsian manifold, Theorem \ref{THM:INF_DUAL_VOLUME} can be equivalently stated as
\begin{equation} \label{eq:lower_bound_volume}
V_\CC(M') \geq \frac{1}{2} \length_{m'}(\mu') 
\end{equation}
for every $M' \in \QD(M)$, where $\length_{m'}(\mu')$ is the length of the bending measure of $\partial \CC M'$. As a consequence of the variation formulae of $V_\CC$ \cite{bonahon1998schlafli} and of $V_\CC^*$ \cite{mazzoli2018the_dual} (see also \cite{krasnov2009symplectic}), we will see in Corollary \ref{cor:optimal_constant} that the multiplicative constant $1/2$ appearing here is optimal, and it is realized near the Fuchsian locus. 

Theorem \ref{THM:INF_DUAL_VOLUME} is to the dual volume as the following result of Bridgeman, Brock, and Bromberg is to the renormalized volume:

\begin{theorem*}[{\cite[Theorem 3.10]{bridgeman_brock_bromberg2017}}]
    For every convex co-\hsk compact hyperbolic $3$-\hsk manifold $M$ with incompressible boundary we have
    \[
    \inf_{M' \in \QD(M)} V_R(M') = \inf_{M' \in \QD(M)} V_\CC(M') .
    \]
    Moreover, $V_R(M') = V_\CC(M')$ if and only if the boundary of the convex core of $M$ is totally geodesic.
\end{theorem*}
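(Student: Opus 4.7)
My plan is to prove the equality of infima by establishing the two inequalities separately; the pointwise equality case will fall out of the comparison formula used for one of them.

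For the inequality $\inf V_R \le \inf V_\CC$, the first step is to establish, for every $M' \in \QD(M)$, the pointwise Krasnov--Schlenker--type identity
\[
V_R(M') = V_\CC(M') - \tfrac{1}{4}\,\length_{m'}(\mu'),
\]
where $m'$ and $\mu'$ are the induced hyperbolic metric and the bending measured lamination on $\partial \CC M'$. I would prove this by comparing the asymptotic regularization defining $V_R$ with the decomposition of the hyperbolic end via the nearest-point retraction onto $\CC M'$, in direct parallel with the identity $V_\CC^* = V_\CC - \tfrac{1}{2}\length_m(\mu)$ recorded in the introduction, only now the weight on the boundary integral is $\tfrac14$ rather than $\tfrac12$, reflecting the different normalization of the counterterm at infinity. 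Since $\length_{m'}(\mu') \ge 0$, this immediately yields $V_R(M') \le V_\CC(M')$ pointwise, hence the desired inequality between infima. The same identity also gives the equality case: $V_R(M') = V_\CC(M')$ iff $\length_{m'}(\mu') = 0$ iff $\mu' = 0$ iff $\partial \CC M'$ is totally geodesic.

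For the reverse inequality $\inf V_R \ge \inf V_\CC$, the strategy is to show that for every $\epsilon > 0$ and every $M^* \in \QD(M)$ almost minimizing $V_R$, one can produce $M'' \in \QD(M)$ with $V_\CC(M'') < V_R(M^*) + \epsilon$. By the pointwise identity above, this amounts to finding $M''$ with $V_R(M'') \approx V_R(M^*)$ and $\length_{m''}(\mu'')$ arbitrarily small. The natural mechanism is a deformation of the conformal structure at infinity that unbends the boundary of the convex core --- for instance a grafting or earthquake flow whose infinitesimal direction is dictated by the current bending lamination $\mu_t$. The first-order behavior of $V_\CC$, $V_R$, and $\length(\mu)$ along such a flow is controlled by Bonahon's Schl\"afli formula and the Krasnov--Schlenker variational formula for $dV_R$, which combined with the pointwise identity give a coupled system governing these three quantities.

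The main obstacle is to verify that such an unbending deformation can be carried out within $\QD(M)$ for all positive times, and that $V_R(M_t)$ does not drift upward as $\length_{m_t}(\mu_t) \to 0$. A cleaner, more indirect route is to argue by contradiction along a minimizing sequence $\{M_n\} \subset \QD(M)$ for $V_\CC$: if $\length_{m_n}(\mu_n)$ were bounded below by a positive constant, one would combine the pointwise identity with a topological lower bound on $V_R$ (in the incompressible-boundary case) to force $V_\CC(M_n)$ to exceed $\inf V_\CC$ by a definite amount, a contradiction. The heart of the argument therefore lies in the quantitative statement that \emph{minimizing sequences for $V_\CC$ must have $\length(\mu) \to 0$}, which should be extracted from the geometry of $\QD(M)$ near the ``quasi-Fuchsian-like'' ideal boundary where $\partial \CC M'$ flattens while the topology of $M$ is preserved; this is where the input from the convex-core inequality $V_\CC \ge \tfrac{1}{2}\length(\mu)$ (in the spirit of Corollary~\ref{cor:optimal_constant}) plays its decisive role.
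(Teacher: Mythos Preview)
There are two substantial problems with your proposal.

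First, the ``Krasnov--Schlenker--type identity'' $V_R(M') = V_\CC(M') - \tfrac{1}{4}\length_{m'}(\mu')$ is not an identity: it is only the inequality $V_R \le V_\CC - \tfrac{1}{4}\length_m(\mu)$ proved by Schlenker, and the defect depends on the full geometry of the end, not just on $(m,\mu)$. Your proposed derivation via the nearest-point retraction would at best reproduce the inequality. This is not fatal for the direction $\inf V_R \le \inf V_\CC$ or for the equality case (both follow from the sandwich $V_\CC - \tfrac{1}{2}\length_m(\mu) \le V_R \le V_\CC - \tfrac{1}{4}\length_m(\mu)$), but it does invalidate the later steps where you try to use the ``identity'' to transfer information from $V_R$ back to $V_\CC$.

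Second, and more seriously, your argument for $\inf V_R \ge \inf V_\CC$ is not a proof. Both routes you sketch reduce to the assertion that minimizing sequences for $V_\CC$ (or for $V_R$) must have $\length_m(\mu) \to 0$, equivalently that $V_\CC(M') - \inf V_\CC \ge c\,\length_{m'}(\mu')$ for some $c > 0$. But this is precisely the content of Theorem~\ref{THM:INF_DUAL_VOLUME} (with $c = \tfrac{1}{2}$), and it is the hard part of the whole story: in Bridgeman--Brock--Bromberg it is obtained by running the Weil--Petersson gradient flow of $V_R$ and controlling $\length_m(\mu)$ along the flow, and in the present paper it is obtained by the analogous gradient-flow argument for $V_k^*$. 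Invoking it ``in the spirit of Corollary~\ref{cor:optimal_constant}'' is circular, since that corollary is a consequence of Theorem~\ref{THM:INF_DUAL_VOLUME}. Neither your unbending-flow sketch nor the minimizing-sequence contradiction supplies an independent mechanism to force $\length_m(\mu)$ small.

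For comparison: the paper does not give a self-contained proof of this statement. It is quoted from \cite{bridgeman_brock_bromberg2017}, and the paper observes that it also follows \emph{a posteriori} from Theorem~\ref{THM:INF_DUAL_VOLUME} together with the pointwise inequalities $V_\CC^* \le V_R \le V_\CC$, which immediately give $\inf V_\CC = \inf V_\CC^* \le \inf V_R \le \inf V_\CC$.
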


By the work of Thurston, if the compact $3$-manfiold with boundary $N : = M \cup \partial_\infty M$ is acylindrical, then there exists a unique convex co-compact structure $M_0 \in \QD(M)$ whose convex core has totally geodesic boundary. In \cite{storm2007hyperbolic} (see also \cite{storm2002minimal}) Storm proved that the infimum of the volume of the convex core function $V_C : \QD(M) \to \R$ is equal to half the simplicial volume of the doubled manifold $D(N)$. Moreover, the infimum is realized exactly when $N$ is acylindrical, and it is achieved at $M_0$. Theorem \ref{THM:INF_DUAL_VOLUME} and \cite[Theorem 3.10]{bridgeman_brock_bromberg2017} then imply that the same characterization holds true for the infimum of the dual volume and the renormalized volume, respectively. In the case of the renormalized volume $V_R$, such description of $\inf V_R$ was first established by Pallete \cite{pallete2016continuity}, without making use of Storm's result. Bridgeman, Brock, and Bromberg \cite{bridgeman2021weilpetersson} recently introduced a notion of surgered gradient flow of the renormalized volume in the relatively acylindrical case, which allowed them to obtain new comparisons between the renormalized volume and the Weil-Petersson geometry of the deformation spaces of convex cocompact $3$-manifolds, generalizing in particular the works of Brock \cite{brock2003the_weil} and Schlenker \cite{schlenker2013renormalized}. In the same work, a new proof of Storm's result in the acylindrical case is obtained by the authors as a biproduct of their analysis (see in particular \cite[Corollary 6.5]{bridgeman2021weilpetersson}).

Dual volume, renormalized volume and Riemannian volume of the convex core are related by the following chain of inequalities:
\[
V_\CC^*(M) \defin V_\CC(M) - \frac{1}{2} \length_m(\mu) \leq V_R(M) \leq V_\CC(M) - \frac{1}{4} \length_m(\mu) \leq V_\CC(M) .
\]
Here the second inequality is due to Schlenker \cite{schlenker2013renormalized}, and the lower bound of $V_R$ is proved in \cite[Theorem 3.7]{bridgeman_brock_bromberg2017}. Observe in particular that Theorem \ref{THM:INF_DUAL_VOLUME} implies the aforementioned result \cite[Theorem 3.10]{bridgeman_brock_bromberg2017} concerning the infimum of the renormalized volume. The request on $M$ to have incompressible boundary is necessary, indeed it has been shown by Pallete \cite{pallete2019upper} that there exist Schottky groups with negative renormalized volume.

\vspace{1em}

The proof of Theorem \ref{THM:INF_DUAL_VOLUME} we present here broadly follows the same strategy of the work of Bridgeman, Brock, and Bromberg \cite{bridgeman_brock_bromberg2017}, with some necessary differences: the authors of \cite{bridgeman_brock_bromberg2017} interpret the renormalized volume as a function $V_R$ over the Teichm\"uller space $\Teich(\partial_\infty M)$ of the \emph{conformal boundary at infinity} of $M$ (by the works of Bers, Maskit, and Kra \cite{bers1970spaces,maskit1971self-maps,kra1972on_spaces}), and they estimate the difference $\abs{V_R - V_\CC}$ as one follows the (opposite of the) Weil-\hsk Petersson gradient flow of $V_R$ on $\Teich(\partial_\infty M)$. In order to study the dual volume function, the analogy between the variation formula of the renormalized volume (see the work of Krasnov and Schlenker \cite[Lemma 5.8]{krasnov2008renormalized}, or Section \ref{subsec:dual_volume}) and the dual Bonahon-\hsk Schl\"afli formula \cite{mazzoli2018the_dual} would tempt us to consider $V_\CC^*$ as a function of the Teichm\"uller space $\Teich(\partial \CC M)$, seen as deformation space of \emph{hyperbolic structures} on the boundary of the convex core of $M$. However, the hyperbolic structure on $\partial \CC M$ is only conjecturally thought to provide a parametrization of the quasi-\hsk isometric deformation space of $M$. To avoid this difficulty, we rather focus our attention of a family of functions $V_k^*$ approximating $V_\CC^*$, for which a similar procedure is possible. 

Given $k$ a real number in the interval $(-1,0)$, we say that an embedded surface $\Sigma_k \subset M$ is a \emph{$k$-\hsk surface} if its first fundamental form (namely the restriction of the metric of $M$ on the tangent space to $\Sigma_k$) is a Riemannian metric with constant Gaussian curvature equal to $k$. Then, by the work of Labourie \cite{labourie1991probleme}, the complementary region of the convex core of $M$ is foliated by $k$-\hsk surfaces, which converge to $\partial \CC M$ as $k$ goes to $-1$, and tend towards the conformal boundary at infinity $\partial_\infty M$ as $k$ goes to $0$. The function $V_k^*(M)$ is then defined to be the dual volume of the region $M_k$ of $M$ enclosed by its $k$-\hsk surfaces, one per each geometrically finite end of $M$. By the works of Labourie \cite{labourie1992metriques} and Schlenker \cite{schlenker2006hyperbolic}, the hyperbolic structures of the $k$-\hsk surfaces do provide a parametrization of $\QD(M)$, fact that allows us to study $V_k^*$ as a function over the Teichm\"uller space of $\partial M_k$. At this point, studying the Weil-\hsk Petersson gradient of $V_k^*$ on $\Teich(\partial M_k)$, we prove that the difference between the dual volume and the standard volume of the regions $M_k$ is well-\hsk behaved as one follows backwards the lines of the flow, and finally we deduce the statement of Theorem \ref{THM:INF_DUAL_VOLUME} by taking a limit for $k$ that goes to $-1$. While the methods of \cite{bridgeman_brock_bromberg2017} for the study of the renormalized volume heavily rely on the relations between the geometry of the boundary of the convex core and the properties of the \emph{Schwarzian at infinity} of $\partial_\infty M$, here we use a more analytical approach to determine the necessary bounds on the geometric quantities of the $k$-\hsk surfaces $\partial_k M$ of $M$, which will guarantee us the existence and the good behavior of the flow of the Weil-\hsk Petersson gradient vector fields of $V_k^*$.

\subsection*{Outline of the paper} After the first section of background, we suggest the reader to initially move backwards (as for the flow of the gradient of the functions $V_k^*$) while going through this exposition: in Section \ref{sec:proof_thmA} the proof of Theorem \ref{THM:INF_DUAL_VOLUME} is described. Here the analogy with the work of Bridgeman, Brock, and Bromberg \cite{bridgeman_brock_bromberg2017} is manifest, the required technical ingredients (Lemma \ref{lem:lower_bound_gradient}, Corollary \ref{cor:flow_always_def} and Lemma \ref{lem:lower_bound_Vk*}) are formally very similar to the ones developed for the renormalized volume. 

Section \ref{sec:gradient_dual_volume} focuses on the study of the Weil-\hsk Petersson gradient $\grd_\WP V_k^*$ of the dual volume functions $V_k^*$ and the proofs of the ingredients mentioned above: in Lemma \ref{lem:lower_bound_gradient} we determine a lower bound of the norm of $\grd_\WP V_k^*$ in terms of the integral of the mean curvature of $\partial M_k$ (which replaces the role of the length $\length_m(\mu)$ in the definition of the dual volume of the regions $M_k$). In Corollary \ref{cor:flow_always_def} we show that the flow of the vector field $\grd_\WP V_k^*$ is defined for all times, and in Lemma \ref{lem:lower_bound_Vk*} we prove the existence of a global lower bound of the dual volumes $V_k^*$ over $\QD(M)$. All the proofs of this section rely on differential-\hsk geometric methods and are consequences of an explicit description of the Weil-\hsk Petersson gradient of $V_k^*$ developed in Proposition \ref{prop:gradient_dual_volume}. This presentation of the vector field $\grd_\WP V_k^*$ is inspired by an orthogonal decomposition of the space of symmetric tensors due to Fischer and Marsden \cite{fischer1975deformations}, and it involves the solution $u_k$ of a simple PDE (equation \eqref{eq:pde}) over the $k$-\hsk surface $\partial M_k$. In particular, the proof of Corollary \ref{cor:flow_always_def} will require us to have a uniform control of the $\mathscr{C}^2$-\hsk norm of the function $u_k$. Section \ref{sec:some_useful_estimates} (and in particular Lemma \ref{lem:estimates_uk}) provides us this last ingredient, and it is essentially based on the classical regularity theory for linear elliptic differential operators (see e. g. \cite{evans1998partial}), and on the following property of $k$-surfaces:
    \begin{proposition*}[{see Proposition \ref{prop:bounds_mean_curvature}}]
        For any $k \in (0,1)$ and $n \in \N$, there exists a positive constant $N_{k,n}$ such that for every convex co-compact hyperbolic $3$-manifold $M$ and for every incompressible $k$-surface $\Sigma_k$ in $M$, the $\mathscr{C}^n$-norm of the mean curvature of $\Sigma_k$ is bounded above by $N_{n,k}$.
    \end{proposition*}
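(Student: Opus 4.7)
The plan is first to obtain a uniform $\mathscr{C}^0$ bound on the second fundamental form $B$ of $\Sigma_k$ and then to bootstrap to $\mathscr{C}^n$ bounds on $H$ via standard elliptic regularity. By the Gauss equation applied in $\Hyp^3$, the principal curvatures $\lambda_1 \geq \lambda_2 > 0$ of $\Sigma_k$ satisfy $\lambda_1 \lambda_2 = 1+k$, so the mean curvature $H = \tfrac12(\lambda_1 + \lambda_2) \geq \sqrt{1+k}$ is automatically bounded below; the real content of the proposition is the uniform upper bound on $\lambda_1$ (equivalently on $H$).

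For the $\mathscr{C}^0$ upper bound I would lift $\Sigma_k$ via incompressibility to a complete, properly embedded $k$-surface $\tilde\Sigma_k \subset \Hyp^3$, and then apply the a priori estimates of Labourie \cite{labourie1991probleme} for such complete $k$-surfaces. The proof there proceeds by a maximum principle argument for an appropriate function of the larger principal curvature $\lambda_1$: using the Codazzi equation together with the relation $\lambda_1\lambda_2 = 1+k$, one shows that $\log \lambda_1$ satisfies an elliptic differential inequality of the form $\Delta \log \lambda_1 \geq -C_k$ at points where $\lambda_1$ is large, and an application of the Omori--Yau maximum principle (valid since the intrinsic Gauss curvature is identically $k$) forces $\lambda_1$ to be bounded in terms of $k$ alone. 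Since the bound descends to $\Sigma_k$ via the deck action, we obtain $\|B\|_{\mathscr{C}^0(\Sigma_k)} \leq N'_k$ for a constant $N'_k$ depending only on $k$.

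Granted this $\mathscr{C}^0$ bound, the $\mathscr{C}^n$ estimates on $H$ follow from standard elliptic theory. In local graph coordinates over a small geodesic disk in a tangent plane to $\Sigma_k$, the $k$-surface equation takes the form of a fully nonlinear Monge-Amp\`ere type operator which --- under the $\mathscr{C}^0$ bound on $B$ and the uniform positive lower bound $\det B = 1+k$ --- is uniformly elliptic with controlled coefficients. Iterated Schauder estimates (see \cite{evans1998partial}) then yield $\mathscr{C}^{n,\alpha}$ bounds on the graphing function, with constants depending only on $k$ and $n$, and these translate into the desired $\mathscr{C}^n$ bounds on $H$ with respect to the intrinsic metric of $\Sigma_k$ --- whose constant curvature $k$ provides uniformly bounded local geometry on the scale of the injectivity radius determined by the previous bound on $B$. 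The main obstacle is the $\mathscr{C}^0$ step, in which the incompressibility hypothesis is essential in order to produce a properly embedded (hence intrinsically complete) lift of $\Sigma_k$ to $\Hyp^3$ and thus invoke the global maximum principle argument of Labourie.
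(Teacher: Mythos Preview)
Your overall strategy --- obtain a uniform $\mathscr{C}^0$ bound on the shape operator and then bootstrap via elliptic regularity --- is a natural alternative to the paper's argument, but the $\mathscr{C}^0$ step as you describe it has a genuine gap.

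First, the attribution to \cite{labourie1991probleme} is inaccurate: the a priori control there is not obtained via a maximum-principle argument on $\log\lambda_1$, but rather by interpreting $k$-surfaces as pseudo-holomorphic curves in a suitable bundle and invoking Gromov-type compactness. More importantly, the differential inequality you propose, $\Delta\log\lambda_1 \geq -C_k$ for $\lambda_1$ large, even if it held, would not force $\lambda_1$ to be bounded via Omori--Yau. The Omori--Yau principle applies to a function already known to be bounded from above and produces a sequence along which the Laplacian has $\limsup \leq 0$; combined with a mere lower bound $\Delta u \geq -C_k$ this yields no contradiction whatsoever. To run such an argument you would need an inequality of the shape $\Delta\log\lambda_1 \geq f(\lambda_1)$ with $f(s)\to +\infty$ as $s\to\infty$, and deriving one from Gauss--Codazzi for $k$-surfaces in $\Hyp^3$ is not straightforward; indeed this is precisely why the literature resorts to compactness here.

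The paper's route is different and cleaner, at the price of non-explicit constants. It argues by contradiction using the compactness criterion for proper isometric embeddings $\Hyp^2_k \to \Hyp^3$ (Labourie \cite{labourie1991probleme}; restated as \cite[Proposition~3.6]{bonsante2019induced}): if no uniform $\mathscr{C}^n$ bound existed, one could pick $k$-surfaces and marked points where some $i$-th covariant derivative of $H$ blows up, normalize by isometries of $\Hyp^2_k$ and $\Hyp^3$ so that the marked points are sent to fixed basepoints, and then extract a subsequence converging $\mathscr{C}^\infty$ on compact sets --- contradicting the blow-up. This handles all orders $n$ simultaneously. Your Schauder bootstrap from $\mathscr{C}^0$ to $\mathscr{C}^n$ is fine in spirit (and would give explicit constants, which the paper's method does not), but it rests entirely on the $\mathscr{C}^0$ bound, which your sketch does not actually establish.
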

    The existence of such universal upper bound was proved (with weaker assumptions than the ones appearing above) by Bonsante, Danciger, Maloni, and Schlenker in \cite[Proposition 3.8]{bonsante2019induced} for $n = 0$ (and the same strategy actually shows that the statement holds for any $n$), and its proof heavily relies on a compactness criterion for isometric immersions of surfaces established by Labourie \cite{labourie1991probleme} (see also Bonsante, Danciger, Maloni, and Schlenker \cite[Proposition 3.6]{bonsante2019induced}). As it will be manifest in the proof of Proposition \ref{prop:bounds_mean_curvature}, the constants $N_{n,k}$ that we will produce are unfortunately not explicit.

\subsection*{Acknowledgments} This work is extracted from my Ph.D. thesis \cite{mazzoli2020thesis}. I would like to thank my advisor Jean-\hsk Marc Schlenker for his help during my doctoral studies in Luxembourg. I am grateful also to Martin Bridgeman, Jeffrey Brock, and Kenneth Bromberg, together with the GEAR Network, for giving me the opportunity to visit them and for our useful conversations, and to Sara Maloni, for bringing to my attention the discussion on the optimality of the multiplicative constant in \eqref{eq:lower_bound_volume}. Finally, I would like to thank the anonymous referees for their useful remarks and suggestions. This work has been partially supported by the Luxembourg National Research Fund PRIDE15/10949314/GSM/Wiese and by the U.S. National Science Foundation grants DMS 1107452, 1107263, 1107367 "RNMS: Geometric Structures and Representation Varieties" (the GEAR Network).

\section{Preliminaries} 

\subsection{Hyperbolic \texorpdfstring{$3$}{3}-manifolds}

Let $M$ be an orientable complete hyperbolic $3$-\hsk manifold, namely a complete Riemannian $3$-\hsk manifold with constant sectional curvature equal to $-1$, and let $\Gamma$ be a discrete and torsion-\hsk free group of orientation-\hsk preserving isometries of the hyperbolic $3$-\hsk space $\Hyp^3$, such that $M$ is isometric to $\Hyp^3 / \Gamma$. We define the \emph{limit set} of $\Gamma$ to be
\[
\Lambda_\Gamma \defin \overline{\Gamma \cdot x_0} \cap \partial_\infty \Hyp^3 ,
\]
where $\overline{\Gamma \cdot x_0}$ denotes the closure of the $\Gamma$-\hsk orbit of $x_0$ in $\overline{\Hyp}{}^3 \defin \Hyp^3 \cup \partial_\infty \Hyp^3$. It is simple to see that the definition of $\Lambda_\Gamma$ does not depend on the choice of the basepoint $x_0 \in \Hyp^3$. If $\Gamma$ is non-\hsk elementary (i. e. it does not have any finite orbit in $\overline{\Hyp}{}^3$), then $\Lambda_\Gamma$ can be characterized as the smallest closed $\Gamma$-\hsk invariant subset of $\partial_\infty \Hyp^3$ (see e. g. \cite[Chapter 12]{ratcliffe2006hyperbolic}). The complementary region $\Omega_\Gamma$ of the limit set in $\partial_\infty \Hyp^3$ is called the \emph{domain of discontinuity} of $\Gamma$.

\subsection{The convex core}

If $\mappa{\pi}{\Hyp^3}{\Hyp^3 / \Gamma \cong M}$ denotes the universal cover of $M$, then a subset $C$ of $M$ is \emph{convex} if and only if $\pi^{-1}(C)$ is convex in $\Hyp^3$. If $\Gamma$ is non-\hsk elementary, then every non-\hsk empty $\Gamma$-\hsk invariant convex subset of $\Hyp^3$ contains the \emph{convex hull} $\CC_\Gamma$ of $\Gamma$, which consists of the intersection of all half-\hsk spaces $H$ of $\Hyp^3$ satisfying $\overline{H} \supseteq \Lambda_\Gamma$ ($\overline{H}$ stands for the closure of $H$ in $\overline{\Hyp}{}^3$). The image $\CC M \defin \pi(\CC_\Gamma)$ describes a convex subset of $M$, called the \emph{convex core} of $M$, which is minimal among the family of non-\hsk empty convex subsets of $M$.

Let now $M$ be a \emph{convex co-\hsk compact} hyperbolic $3$-\hsk manifold, namely a non-\hsk compact complete hyperbolic $3$-\hsk manifold whose convex core is compact. The boundary of the convex core $\partial \CC M$ of $M$ is the union of a finite collection of connected surfaces, each of which is totally geodesic outside a subset of Hausdorff dimension $1$. As described in \cite{canary_epstein_green2006}, the hyperbolic metrics on the totally geodesic pieces "merge" together, defining a complete hyperbolic metric $m$ on $\partial \CC M$. The locus where the boundary of the convex core is not flat is a geodesic lamination $\lambda$, i. e. a closed subset that is union of disjoint simple geodesics. The surface $\partial \CC M$ is bent along $\lambda$, and the amount of bending can be described by a measured lamination $\mu$, called the \emph{bending measure} of $\partial \CC M$. The $\mu$-\hsk measure along an arc $k$ transverse to $\lambda$ consists of an integral sum of the exterior dihedral angles along the leaves that $k$ meets. By locally integrating the lengths of the leaves of the lamination in $\dd{\mu}$, we obtain the notion of length of the bending measure with respect to the hyperbolic structure $m$, which will be denoted by $\length_m(\mu)$. For a more detailed description we refer to \cite[Section~II.1.11]{canary_epstein_green2006}, or \cite{bonahon1988the_geometry}.
 
\subsection{Incompressible boundary}

When $M$ is convex co-\hsk compact and $\Gamma$ is a discrete and torsion-\hsk free subgroup of isometries of $\Hyp^3$ such that $M \cong \Hyp^3 / \Gamma$, $\Gamma$ acts freely and properly discontinuous on the domain of discontinuity $\Omega_\Gamma$, and the quotient of $\Hyp^3 \cup \Omega_\Gamma$ bt $\Gamma$ determines a natural compactification of $M$, which will be denoted by $\overline{M} = M \cup \partial_\infty M$. Then $M$ is said to have \emph{incompressible boundary} if the inclusion $S \rightarrow \overline{M}$ of each connected component $S$ of $\partial_\infty M$ induces an injection at the level of the fundamental groups. This implies in particular that any lift of the inclusion $S \rightarrow \overline{M}$ to the universal covers $\widetilde{S} \rightarrow \widetilde{\overline{M}}$ is a homeomorphism onto its image.

\subsection{Constant Gaussian curvature surfaces}

\begin{definition}
	Let $S$ be an immersed surface inside a Riemannian $3$-\hsk manifold $N$. The \emph{first fundamental form} $\I$ of $S$ is the Riemannian metric of $S$ given by the restriction of the metric of $N$ to the tangent spaces of $S$. If $S$ admits a unitary normal vector field $\mappa{\nu}{S}{T^1 N}$, we define its \emph{shape operator} $B$ to be the endomorphism of $T S$ given by $B U \defin - \altmathcal{D}_U \nu$, for every tangent vector field $U$ of $S$ (here $\altmathcal{D}$ denotes the Levi-\hsk Civita connection of $N$). The trace of the shape operator will be called the \emph{mean curvature} of $S$, and the tensor $\II \defin \I(B \cdot, \cdot)$ the \emph{second fundamental form} of $S$.
\end{definition}

Let $\Sigma$ be a surface immersed in a hyperbolic $3$-\hsk manifold $M$, with first and second fundamental forms $\I$ and $\II$, and shape operator $B$. We denote by $K_e$ its \emph{extrinsic curvature}, i. e. $K_e = \det B$, and by $K_i$ its \emph{intrinsic curvature}, i. e. the Gaussian curvature of the Riemannian metric $\I$. Then the Gauss-\hsk Codazzi equations of $(\Sigma,\I,\II)$ can be expressed as follows:
\begin{gather*}
    K_i =  K_e - 1 , \\
    (\nabla_U B)V = (\nabla_V B)U \quad \forall U, V ,
\end{gather*}
where $U$ and $V$ are tangent vector fields to $\Sigma$, and $\nabla$ is the Levi-Civita connection of the metric $\I$.

\begin{definition}
    Let $\Sigma$ be an immersed surface inside a hyperbolic $3$-\hsk manifold, and let $k \in (-1,0)$. $\Sigma$ is a \emph{$k$-\hsk surface} if its intrinsic curvature is constantly equal to $k$.
\end{definition}

If $\Sigma$ is a $k$-\hsk surface, then its extrinsic curvature $K_e = k + 1$ is positive, since $k \in (-1,0)$. In particular, $\Sigma$ is a (locally) strictly convex surface.

In every convex co-\hsk compact $3$-\hsk manifold $M$, the subset $M \setminus \CC M$ is the disjoint union of a finite number of geometrically finite hyperbolic ends $(E_i)_i$, each of which is homeomorphic to $\Sigma_i \times (0, \infty)$, for some compact orientable surface $\Sigma_i$ of genus larger than or equal to $2$. By the work of Labourie \cite{labourie1991probleme}, the sets $E_i$ are foliated by embedded $k$-\hsk surfaces $(\Sigma_{i,k})_k$, with $k$ that varies in $(-1,0)$. The surfaces $\Sigma_{i,k}$ approach the components of the pleated boundary $\partial \CC M$ of the convex core of $M$ as $k$ goes to $-1$, and the components of conformal boundary at infinity $\partial_\infty M$ as $k$ goes to $0$. 

We will denote by $M_k$ the compact region of $M$ whose boundary $\partial M_k$ consists of the union of the surfaces $\bigcup_i \Sigma_{i,k}$, and we will endow $\partial M_k$ with the second fundamental form $\II_k$ defined by the normal vector field pointing towards $\partial M_k$, so that $\II_k$ is positive definite, and $H_k$ is a positive function (observe that the eigenvalues of the shape operator have the same sign since $K_e = \det B > 0$).

\subsection{Deformation spaces} \label{subsec:deform_spaces}
Let $\Sigma$ be a compact orientable surface of genus larger than or equal to $2$. The \emph{Teichm\"uller space} of $\Sigma$, denoted by $\Teich(\Sigma)$, is the space of isotopy classes of hyperbolic metrics on $\Sigma$. Equivalently, in light of the Uniformization Theorem, $\Teich(\Sigma)$ can be described as the space of isotopy classes of conformal structures over $\Sigma$ (compatible with the choice of a fixed orientation on $\Sigma$).

Since convex co-\hsk compact hyperbolic $3$-\hsk manifolds are not closed, several different notions of deformation spaces can be introduced. In this exposition we will consider the \emph{quasi-\hsk isometric} (or quasi-\hsk conformal) deformation space. 

\begin{definition}
    Given $M$, $M'$ hyperbolic manifolds, a diffeomorphism $M \rightarrow M'$ is a \emph{quasi-\hsk isometric deformation} of $M$ if it globally bi-\hsk Lipschitz. We denote by $\QD(M)$ the space of quasi-\hsk isometric deformations of $M$, where we identify two deformations $M \rightarrow M'$ and $M \rightarrow M''$ if their pullback metrics are isotopic to each other.
\end{definition}

\begin{remark} \label{rmk:quasi_isom_quasi_conf}
    By a Theorem of Thurston \cite[Proposition 8.3.4]{thurston1979geometry}, two hyperbolic $n$-\hsk manifolds $M$ and $M'$ are quasi-\hsk isometric if and only if their fundamental groups $\Gamma$, $\Gamma'$ (as subgroups of the isometry group of $\Hyp^n$) are quasi-\hsk conformally conjugated, i. e. there exists a quasi-\hsk conformal self-\hsk homeomorphism $\varphi$ of $\partial_\infty \Hyp^n$ such that $\varphi \Gamma \varphi^{-1} = \Gamma'$.
\end{remark}

We denote by $m_k(M) \in \Teich(\partial M_k) = \prod_i \Teich(\Sigma_i) $ the isotopy class of the hyperbolic metric $(- k) \, \I_k$, where $\I_k$ is the first fundamental form of the $k$-\hsk surface $\partial_k M$ of $M$. Then for every $k \in (-1,0)$ we have maps
\[
\begin{matrix}
	T_k \vcentcolon & \QD(M) & \longrightarrow & \Teich(\partial M_k) \\
	& M & \longmapsto & m_k(M) . \\
\end{matrix}
\]
The convenience in considering foliations by $k$-\hsk surfaces relies in the following result, based on the works of Labourie \cite{labourie1992metriques} and Schlenker \cite{schlenker2006hyperbolic}:

\begin{theorem} \label{thm:ksurfaces_homeo}
    If $M$ has incompressible boundary the map $T_k$ is a $\mathscr{C}^1$-\hsk diffeomorphism for every $k \in (-1,0)$.
\end{theorem}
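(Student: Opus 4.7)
The plan is to combine two independent results from the literature to establish bijectivity of $T_k$, and then promote the map to a $\mathscr{C}^1$-diffeomorphism by a dimension count together with an invariance-of-domain argument.

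First, I would verify that $T_k$ is well-defined: by Labourie's foliation result \cite{labourie1991probleme}, each geometrically finite end of any $M' \in \QD(M)$ contains, for every $k \in (-1,0)$, a unique embedded $k$-surface, so the scaled first fundamental form $(-k)\I_k$ is a well-defined hyperbolic metric on $\partial M'_k$ whose isotopy class depends only on the isotopy class of the deformation. Smoothness of $T_k$ then follows from the smooth dependence of the $k$-surface on the ambient hyperbolic structure, which is a standard consequence of the implicit function theorem applied to the prescribed-curvature equation for strictly convex immersed surfaces; the linearized equation is strictly elliptic precisely because $k$-surfaces are strictly convex.

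For bijectivity, I would quote two companion results: Labourie \cite{labourie1992metriques} for surjectivity, namely that every hyperbolic metric $[m] \in \Teich(\partial M_k)$ is realized as $(-k)\I_k$ on the $k$-surface of some convex co-compact structure in $\QD(M)$; and Schlenker \cite{schlenker2006hyperbolic} for injectivity, namely the uniqueness of this realization. The incompressible boundary hypothesis is essential here, since it identifies $\QD(M)$ — via the Bers--Maskit--Kra parametrization by the conformal boundary at infinity — with a product of Teichm\"uller spaces of surfaces of the same genera as the components of $\partial M_k$, ensuring that $\QD(M)$ and $\Teich(\partial M_k)$ are smooth manifolds of equal (real) dimension. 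Without this hypothesis the very existence of a diffeomorphism would fail for dimensional reasons.

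The final step is to upgrade the smooth bijection $T_k$ to a $\mathscr{C}^1$-diffeomorphism. This could be done either by invoking invariance of domain between manifolds of equal dimension, or by showing directly that $dT_k$ is a linear isomorphism at every point. The main obstacle I anticipate is precisely this linearization: verifying that a nontrivial infinitesimal deformation of $M' \in \QD(M)$ cannot induce the trivial variation of the induced hyperbolic metric on its $k$-surface. This is an infinitesimal rigidity statement, weaker than but not an immediate corollary of Schlenker's theorem, and I expect it to reduce to a linear elliptic problem on $\partial M'_k$ — the formal linearization of the prescribed-curvature equation along the normal direction. Once this is in place, standard functional analysis closes the argument.
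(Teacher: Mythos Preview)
The paper does not supply its own proof of this theorem; it is simply stated as a consequence of the works of Labourie \cite{labourie1992metriques} and Schlenker \cite{schlenker2006hyperbolic}, exactly the two sources you invoke. Your outline of how these ingredients combine is correct and matches the paper's attribution.

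One small correction worth making: invariance of domain alone will not upgrade a smooth bijection to a $\mathscr{C}^1$-diffeomorphism (think of $x \mapsto x^3$ on $\R$); it only yields a homeomorphism. You therefore genuinely need the second route you mention --- that $\dd T_k$ is everywhere an isomorphism --- and you correctly identify this infinitesimal rigidity as the substantive step. In fact, in Schlenker's argument the logic runs in the opposite order from your sketch: one first proves that $T_k$ is a local diffeomorphism via the linearized elliptic problem, then establishes properness, and concludes that $T_k$ is a covering of the simply connected space $\Teich(\partial M_k)$, hence a global diffeomorphism. Global injectivity is thus a \emph{consequence} of the infinitesimal rigidity and properness rather than a separate black-box input, so your ``main obstacle'' is not merely an upgrade step but the heart of the matter.
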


In the non-\hsk incompressible case a similar statement can be recovered, replacing the role of the Teichm\"uller space $\Teich(\partial M_k)$ with its quotient by the action of a suitable subgroup of the mapping class group of $\partial M_k$ (see e. g. \cite[Theorem 5.1.3]{marden2007outer} for the corresponding statement concerning the conformal structure of the boundary at infinity).

As mentioned in the introduction, it is an open question, asked by W. P. Thurston, whether the same statement is true for the hyperbolic structures on the boundary of the convex core, which could be considered as the case $k = - 1$ in Theorem \ref{thm:ksurfaces_homeo}. More precisely, the map $T_{-1}$ is known to be continuously differentiable by \cite{bonahon1998variations}, surjective by the work of Sullivan (described in \cite{canary_epstein_green2006}), but there are no results concerning its injectivity.

\subsection{Dual volume} \label{subsec:dual_volume}

Let $M$ be a convex co-compact hyperbolic $3$-\hsk manifold. If $N$ is a compact convex subset of $M$ with smooth boundary, we define the \emph{dual volume} of $N$ to be
    \[
    V^*(N) \defin V(N) - \frac{1}{2} \int_{\partial N} H \dd{a},
    \]
    where $H$ stands for the mean curvature of $\partial N$ defined using the inner normal vector field, and $V(N)$ is the Riemannian volume of $N$. We refer to \cite{mazzoli2020thesis} for a description of the relation between the notion of dual volume and the polarity correspondence between the hyperbolic and de Sitter spaces.
    
    For every $k \in (-1,0)$, we set $\mappa{V_k^*}{\Teich(\partial M_k)}{\R}$ to denote the function that associates, with a hyperbolic structure $m_k \in \Teich(\partial M_k)$, the dual volume of the region $\partial M_k'$ enclosed by the $k$-\hsk surfaces of the unique convex co-\hsk compact hyperbolic $3$-\hsk manifold $M' = T_k^{-1}(m_k)$ whose $k$-\hsk surfaces have hyperbolic structure $m_k$.
    
    \vspace{1em}
    
    If $(N_h)_h$ is a sequence of convex compact subsets approaching $\CC M$, then the integral of the mean curvature over $\partial N_h$ approaches $\length_m(\mu)$, the length of the bending measure $\mu$ with respect to the hyperbolic structure of $\partial \CC M$. This suggests us to set the \emph{dual volume of the convex core} of $M$ as
    \[
    V_\CC^*(M) \defin V(\CC M) - \frac{1}{2} \length_m(\mu) .
    \]
    In \cite{mazzoli2018the_dual} a first order variation formula for the function $V_\CC^*$ over $\QD(M)$ is studied, called the \emph{dual Bonahon-\hsk Schl\"afli formula}:
    \[
    \dd{V_\CC^*}(\dot{M}) = - \frac{1}{2} \dd{L_\mu}(\dot{m}) ,
    \]
    where $\dot{m}$ denotes the first order variation of the hyperbolic metric on $\partial \CC M$ along $\dot{M}$, and $\mappa{L_\mu}{\Teich(\partial \CC M)}{\R}$ is the function that associates with every hyperbolic structure $m$ the length of the $m$-\hsk geodesic realization of $\mu$.
    
    A strong similarity between dual and renormalized volumes is displayed by their variations formulae. The renormalized volume satisfies
    \[
    \dd{V_R}(\dot{M}) = - \frac{1}{2} \dd{\ext_{\altmathcal{F}_\infty}}(\dot{c}_\infty) ,
    \]
    where $\dot{c}_\infty$ denotes the first order variation of the conformal structure on  $\partial_\infty M$ along $\dot{M}$, and $\mappa{\ext_{\altmathcal{F}_\infty}}{\Teich(\partial_\infty M)}{\R}$ is the function that associates with every conformal structure $c$ the extremal length of the horizontal measured foliation of the Schwarzian at infinity of $M$ with respect to $c$ (see Schlenker \cite{schlenker2017notes} for a proof of this relation).
    
\subsection{Norms on \texorpdfstring{$T \Teich(\Sigma)$}{T Teich(S)}} \label{subsec:norms_on_TT}

First we introduce the necessary notation for the ``Riemannian geometric tools'' that will be used in the rest of the paper. Let $(N, g)$ be a Riemannian manifold, and consider $(e_i)_i$ a local $g$-\hsk orthonormal frame. Given $T$ a symmetric $2$-\hsk tensor on $N$, we define its $g$-\hsk divergence as the $1$-\hsk form
\[
(\divr_g T)(X) \defin \sum_i ({}^g \nabla_{e_i} T)(e_i, X) ,
\]
for every tangent vector field $X$. Similarly, the $g$-\hsk divergence of a vector field $X$ is the function
\[
\divr_g X = \sum_i g({}^g \nabla_{e_i} X, e_i) .
\]
The Laplace-\hsk Beltrami operator can be expressed as $\Delta_g f = \divr_g \grd_g f$. Given two symmetric tensors $T$, $T'$, their scalar product is defined as
\[
\scall{T}{T'}_g \defin g^{i j} \, g^{h k} \, T_{i h} \, T_{j k}' = \tr(g^{-1} T \, g^{-1} T') .
\]
In particular, we set $\tr_g T \defin \scall{g}{T}_g = \tr(g^{-1} T)$. In the next sections it will also be useful to keep in mind the way that these operators change if with replace $g$ with $\lambda g$, for some positive constant $\lambda$:
\begin{gather}
    \divr_{\lambda g} T = \lambda^{-1} \, \divr_g T, \quad \Delta_{\lambda g} f = \lambda^{-1} \, \Delta_g f, \quad \dd{a}_{\lambda g} = \lambda^{n/2} \, \dd{a}_g , \label{eq:1} \\
    \scall{T}{T'}_{\lambda g} = \lambda^{-2} \scall{T}{T'}_g, \qquad \tr_{\lambda g} T = \lambda^{-1} \tr_g T , \label{eq:2}
\end{gather}
if $\dim N = n$.

\vspace{1em}

Let now $\altmathcal{M}$ be the set of Riemannian metrics on $\Sigma$, and let $\altmathcal{H}$ be the subset of the hyperbolic ones. The first order variations $\dot{g}$ of elements of $\altmathcal{M}$ identify with smooth symmetric $2$-\hsk tensors on $\Sigma$. The choice of a metric $g \in \altmathcal{M}$ determines a scalar product on $T_g \altmathcal{M}$, which can be expressed as
\[
\scall{\sigma}{\tau}_{\textit{FT},g} \defin \int_{\Sigma} \scall{\sigma}{\tau}_g \dd{a}_g ,
\]
where \textit{FT} stands for Fischer-Tromba. We define $S^{tt}_2(\Sigma,g)$ to be the space of those symmetric tensors $\sigma$ that are traceless with respect to $g$ (i. e. $\scall{\sigma}{g}_g = 0$) and $g$-\hsk divergence-\hsk free (i. e. $\divr_g \sigma = 0$, as defined above). Such tensors are also called \emph{transverse traceless}. A simple way to characterize the space $S^{tt}_2(\Sigma, g)$ is through \emph{holomorphic quadratic differentials}. A holomorphic quadratic differential $\phi$ on $(\Sigma, g)$ is a $\C$-\hsk valued symmetric tensor that can be locally written as $\phi = f \dd{z}^2$, where $z$ is a local coordinate conformal to the metric $g$ (and compatible with a given orientation), and $f = f(z)$ is a holomorphic function. Transverse traceless tensors are exactly those $2$-\hsk tensors that can be written as $\Re \phi$, for some $\phi$ holomorphic quadratic differential on $(\Sigma, h)$.

It is shown in \cite{tromba2012teichmuller} that, for every hyperbolic metric $h$, $S^{tt}_2(\Sigma,h)$ coincides with
\[
T_h \altmathcal{H} \cap (T_h (\mathrm{Diff}_0(\Sigma) \cdot h))^\perp ,
\]
where $T_h (\mathrm{Diff}_0(\Sigma) \cdot h)$ is the tangent space to the orbit of $h$ by the action of the group of diffeomorphisms of $\Sigma$ isotopic to the identity, and $(\cdot)^\perp$ is taken with respect to the scalar product $\scall{\cdot}{\cdot}_{\textit{FT}, h}$ on $T_h \altmathcal{M}$. Therefore, if $m = [h]$ denotes the isotopy class of a hyperbolic metric on $\Sigma$, we can identify $S^{tt}_2(\Sigma,h)$ with $T_m \Teich(\Sigma)$, the tangent space at $m$ to Teichm\"uller space $\Teich(\Sigma) = \altmathcal{H} / \mathrm{Diff}_0(\Sigma)$, seen as the space of isotopy classes of hyperbolic metrics on $\Sigma$. Moreover, the restriction of the scalar product $\scall{\cdot}{\cdot}_{\textit{FT},h}$ to $S^{tt}_2(\Sigma,h)$ coincides with (a multiple of) the \emph{Weil-\hsk Petersson metric} $\scal{\cdot}{\cdot}_\WP$ (see Lemma \ref{lemma:relations_between_norms} for the explicit multiplicative constant).

\vspace{1em}

The Teichm\"uller space can also be endowed with another Finsler norm that arises from its conformal (or quasi-conformal) interpretation, namely the \emph{Teichm\"uller norm}. The Teichm\"uller norm $\norm{\cdot}_\Teich$ of a tangent vector $\dot{m} \in T_{m} \Teich(\Sigma)$ is the infimum of the $L^\infty$-\hsk norms of the Beltrami differentials representing $\dot{m}$. It is not difficult to see that the Beltrami differential associated to the tangent direction $2 \Re \phi$ coincides with $\nu_{\phi}$, the \emph{harmonic Beltrami differential} associated to $\phi$ (see e. g. \cite{gardiner2000quasiconformal} for a detailed description of these notions, and \cite[Lemma 1.2]{mazzoli2019dual_volume_WP} for a direct computation of this relation). Moreover, the $L^\infty$-\hsk norm of $\nu_{\phi}$ can be computed as follows
\[
\norm{\nu_{\phi}}_\infty = \frac{1}{\sqrt{2}} \sup_{\Sigma} \norm{\Re \phi}_{h} .
\]

We summarize what we observed in the following Lemma:

\begin{lemma} \label{lemma:relations_between_norms}
    For every hyperbolic metric $h$ representing the isotopy class $m \in \Teich(\Sigma)$, the tangent space $T_m \Teich(\Sigma)$ identifies with $S^{tt}_2(\Sigma,h)$. For every $\dot{m} \in T_m \Teich(\Sigma)$ we have
    \begin{gather*}
        \norm{\dot{m}}_\WP = \frac{1}{\sqrt{2}} \norm{\Re \phi}_{\textit{FT}, h} , \\
        \norm{\dot{m}}_\Teich \leq \frac{1}{\sqrt{2}} \sup_\Sigma \norm{\Re \phi}_{h} ,
    \end{gather*}
    where $\phi$ is a holomorphic quadratic differential such that $2 \Re \phi$ represents $\dot{m}$ inside $S^{tt}_2(\Sigma,h)$.
\end{lemma}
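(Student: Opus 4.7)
The plan is to treat the three assertions in order. The identification $T_m\Teich(\Sigma)\cong S^{tt}_2(\Sigma,h)$ is precisely the slice decomposition recalled in the paragraph preceding the statement, due to Tromba, so I would simply quote it from \cite{tromba2012teichmuller}. The two norm statements then both reduce to a single pointwise calculation in a local conformal coordinate, which I would carry out first.

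Write $h = \rho^2(dx^2+dy^2)$ in a coordinate $z = x+iy$ conformal to $h$ and compatible with the orientation, and $\phi = f\,dz^2$ with $f = u+iv$ holomorphic. Expanding $dz^2 = (dx^2 - dy^2) + 2i\,dx\,dy$ gives $\Re\phi = u\,(dx^2 - dy^2) - 2v\,dx\,dy$, so the $(1,1)$-tensor $h^{-1}\Re\phi$ has matrix $\rho^{-2}\bigl(\begin{smallmatrix} u & -v \\ -v & -u \end{smallmatrix}\bigr)$, which squares to $\rho^{-4}(u^2+v^2)\,I$. The formula $\norm{\Re\phi}_h^2 = \mathrm{tr}((h^{-1}\Re\phi)^2)$ from Section \ref{subsec:norms_on_TT} then yields
\[
\norm{\Re\phi}_h^2 = 2\rho^{-4}\abs{f}^2 .
\]
Integrating against $da_h = \rho^2\,dx\,dy$ and comparing with the standard expression $\norm{\phi}^2_\WP = \int_\Sigma \abs{f}^2 \rho^{-2}\,dx\,dy$ of the Weil-Petersson norm of a holomorphic quadratic differential, one obtains $\norm{\Re\phi}^2_{\textit{FT},h} = 2\,\norm{\phi}^2_\WP$. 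Since $\dot m \leftrightarrow 2\Re\phi$ and its Weil-Petersson norm as a tangent vector equals $\norm{\phi}_\WP$ under the standard identification of $T_m\Teich(\Sigma)$ with holomorphic quadratic differentials, the first displayed identity follows.

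For the Teichm\"uller norm bound, I would use the harmonic Beltrami differential $\nu_\phi = \bar f / \rho^2 \cdot d\bar z/dz$ associated to $\phi$ as in \cite{gardiner2000quasiconformal}. Its pointwise modulus is $\abs{\nu_\phi} = \abs{f}/\rho^2$, which by the previous computation equals $\tfrac{1}{\sqrt{2}}\norm{\Re\phi}_h$; taking the supremum over $\Sigma$ gives $\norm{\nu_\phi}_\infty = \tfrac{1}{\sqrt{2}}\sup_\Sigma \norm{\Re\phi}_h$. Since $\nu_\phi$ is one specific Beltrami differential representing the class $\dot m$, and the Teichm\"uller norm is by definition the infimum of $L^\infty$-norms over all such representatives, the stated inequality follows. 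The only real subtlety is bookkeeping the normalization constants: the factor of $2$ in the identification $\dot m \leftrightarrow 2\Re\phi$, the normalization of the Weil-Petersson pairing on quadratic differentials, and the Fischer-Tromba convention each contribute numerical factors, and they must cohere to produce exactly the $1/\sqrt{2}$ of the statement rather than some other power; this is the one place where a careful cross-check against the references is needed.
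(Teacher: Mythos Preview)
Your proposal is correct and is in fact more detailed than what the paper does: in the paper this lemma is introduced with ``We summarize what we observed in the following Lemma'' and carries no separate proof---the identification with $S^{tt}_2(\Sigma,h)$ is cited from \cite{tromba2012teichmuller}, the Weil--Petersson relation is stated as a fact about the multiplicative constant relating $\scall{\cdot}{\cdot}_{\textit{FT},h}$ to $\scal{\cdot}{\cdot}_\WP$, and the Teichm\"uller bound is obtained from the stated identity $\norm{\nu_\phi}_\infty = \tfrac{1}{\sqrt{2}}\sup_\Sigma\norm{\Re\phi}_h$ together with the definition of $\norm{\cdot}_\Teich$ as an infimum. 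Your local-coordinate computation $\norm{\Re\phi}_h^2 = 2\rho^{-4}\abs{f}^2$ is exactly what underlies both displayed formulas, and you correctly identify the one genuine hazard, namely matching the normalization of $\norm{\cdot}_\WP$ on tangent vectors against the standard $L^2$ pairing on quadratic differentials so that the $1/\sqrt{2}$ comes out right.
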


\section{Some useful estimates} \label{sec:some_useful_estimates}

In this section we determine estimates for the solution $u_k$ of a certain linear PDE, defined over a $k$-\hsk surface lying inside an end of a convex co-\hsk compact hyperbolic $3$-\hsk manifold with incompressible boundary. The function $u_k$ will be later used to describe the Weil-\hsk Petersson gradient of the dual volume functions $V_k^*$, and the bounds produced in this section will play an important role in the study of its flow. 

\vspace{1em}

Given $(N, g)$ a Riemannian manifold with Levi-\hsk Civita connection ${}^g \nabla$ and area form $\dd{a}_g$, we denote by $H^n(N, \dd{a}_g)$ the Sobolev space of real-\hsk valued functions $f$ on $N$ with $L^2(N, \dd{a}_g)$-\hsk integrable weak derivatives $({}^g \nabla)^i f$ for all $i \leq n$. The space $H^n(N, \dd{a}_g)$ is Hilbert if endowed with the scalar product
\[
\scall{f}{f'} \defin \sum_{i = 0}^n \int_N \scall{({}^g \nabla)^i f}{({}^g \nabla)^i f'}_g \dd{a_g} , \qquad f, f' \in H^n(N, \dd{a}_g) ,
\]
where $\scall{\cdot}{\cdot}_g$ denotes the scalar product induced by $g$ on the space of $i$-\hsk tensors over $N$. Given $\mappa{f}{N}{\R}$ a $\mathscr{C}^n$-\hsk function, we define its $\mathscr{C}^n(N, g)$-\hsk norm as
\[
\norm{f}_{\mathscr{C}^n(N, g)} \defin \sum_{i = 0}^n \sup_{p \in N} \norm{\left. ({}^g \nabla)^i f \right|_p}_g ,
\]
where $\norm{T}_g = \sqrt{\scall{T}{T}_g}$.

Let now $h_k$ denote the hyperbolic metric $(- k) \I_k$ on the $k$-\hsk surface $\partial M_k$, with Levi-\hsk Civita connection ${}^k \nabla$ and Laplace-\hsk Beltrami operator $\Delta_k$ (here we consider $\Delta_k u$ to be the trace of the Hessian of $u$). We define the linear differential operator $L_k$ to be
\[
L_k u \defin (\Delta_k - 2 \, \1) u = \Delta_k u - 2 u .
\]
Let $A$ be the symmetric bilinear form on $H^1(\partial M_k, \dd{a_k})$ with quadratic form
\[
A(u,u) \defin - \scall{L_k u}{u} = \int_\Sigma (\norm{\dd{u}}_k^2 + 2 u^2) \dd{a_k} ,
\]
where $\norm{\cdot}_k$ and $\dd{a_k}$ denote the norm and the area form of $h_k$, respectively. By the Lax-\hsk Milgram's theorem (see e. g. \cite[Corollary 5.8]{brezis2011functional}) applied to the Sobolev space $H^1(\partial M_k, \dd{a_k})$ and to the coercive symmetric bilinear form $A$ we have that, for every $f \in L^2(\partial M_k, \dd{a_k})$, there exists a unique weak solution $u \in H^1(\partial M_k,\dd{a_k})$ of the equation $L_k u = f$. We will in particular denote by $u_k$ the function satisfying
\begin{equation} \label{eq:pde}
    L_k u_k = - k^{-1} H_k \Leftrightarrow \Delta_{\I_k} u_k + 2 k u_k = H_k ,
\end{equation}
where $H_k$ denotes the mean curvature of the $k$-\hsk surface $\partial M_k$. By the classical regularity theory for linear elliptic PDE's (see e. g. \cite[Section 6.3]{evans1998partial}), the smoothness of the mean curvature $H_k$ and the compactness of $\partial M_k$ imply that the function $u_k$ is smooth and it is a strong solution of equation \eqref{eq:pde}. 

By the work of Rosemberg and Spruck \cite[Theorem 4]{rosenberg1994existence}, for every Jordan curve $c$ in $\partial_\infty \Hyp^3$ there exist exactly two $k$-\hsk surfaces $\widetilde{\Sigma}_k^\pm(c)$ asymptotic to $c$. A fundamental property of $k$-\hsk surfaces, which will crucial in Lemma \ref{lem:estimates_uk}, is described by the following Proposition.

\begin{proposition}[{\cite[Proposition 3.8]{bonsante2019induced}}] \label{prop:bounds_mean_curvature}
    Let $k \in (-1,0)$ and $n \in \N$. Then there exists a constant $N_{k,n} > 0$ such that, for every Jordan curve $c$ in $\partial_\infty \Hyp^3$, the mean curvature $H_{c,k}$ of the $k$-\hsk surface $\widetilde{\Sigma}_k(c) = \widetilde{\Sigma}_k^+(c) \sqcup \widetilde{\Sigma}_k^-(c)$ asymptotic to $c$ satisfies
    \[
    \norm{H_{c,k}}_{\mathscr{C}^n(\widetilde{\Sigma}_k(c))} \leq N_{n, k} .
    \]
\end{proposition}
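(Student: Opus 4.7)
The plan is to mimic the argument used by Bonsante, Danciger, Maloni, and Schlenker for the case $n = 0$ \cite[Proposition 3.8]{bonsante2019induced}, and observe that exactly the same compactness machinery produces the bound on every $\mathscr{C}^n$-norm. The only real input is Labourie's compactness criterion \cite{labourie1991probleme} (in the form of \cite[Proposition 3.6]{bonsante2019induced}) for pointed complete $k$-surfaces in $\Hyp^3$.

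\textbf{Execution.} I argue by contradiction. Suppose the statement fails for some $k$ and $n$. Then there exist Jordan curves $c_j \subset \partial_\infty \Hyp^3$, surfaces $\widetilde{\Sigma}_j \defin \widetilde{\Sigma}_k(c_j)$ with induced hyperbolic metric $h_j$, Levi-Civita connection $\nabla_j$ and mean curvature $H_j$, together with points $p_j \in \widetilde{\Sigma}_j$, such that
\[
\sum_{i = 0}^{n} \norm{\left.(\nabla_j)^i H_j \right|_{p_j}}_{h_j} \xrightarrow[j \to \infty]{} + \infty .
\]
Using the transitivity of $\Iso(\Hyp^3)$ on orthonormal frames, I pre-compose each immersion with an isometry $\gamma_j$ of $\Hyp^3$ so that $\gamma_j(p_j) = p_*$ for a fixed base point $p_* \in \Hyp^3$ and the unit normal of $\gamma_j \cdot \widetilde{\Sigma}_j$ at $p_*$ is a fixed unit vector $\nu_*$; this preserves the intrinsic and extrinsic geometry of the surfaces, hence the blow-up persists at the common base point. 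By Rosenberg--Spruck, each $\widetilde{\Sigma}_j$ is a complete convex surface, and its intrinsic geometry is constant equal to $k$. Hence Labourie's compactness criterion applies to the pointed complete $k$-surfaces $(\gamma_j \cdot \widetilde{\Sigma}_j, p_*, \nu_*)$: up to a subsequence they converge in the $\mathscr{C}^\infty$-topology on compact subsets to a pointed complete $k$-surface $\widetilde{\Sigma}_\infty \subset \Hyp^3$ through $p_*$ with normal $\nu_*$. Smooth convergence of the immersions forces smooth convergence of the induced metrics, their Levi-Civita connections, the shape operators, and consequently of all the covariant derivatives of $H_j$ in a neighborhood of $p_*$. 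Therefore
\[
\sum_{i = 0}^{n} \norm{\left.(\nabla_j)^i H_j\right|_{p_*}}_{h_j} \longrightarrow \sum_{i = 0}^{n} \norm{\left.(\nabla_\infty)^i H_\infty\right|_{p_*}}_{h_\infty} < + \infty ,
\]
contradicting the assumption.

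\textbf{Main obstacle.} The only point at which the proof of the case $n = 0$ must be strengthened is the promotion from pointwise to higher-order bounds. Concretely, one needs that Labourie's theorem yields $\mathscr{C}^\infty$ convergence of the immersions, so that the $n$-th covariant derivatives of the mean curvature pass to the limit. This is a standard consequence of the elliptic nature of the Codazzi system $(\nabla_U B)V = (\nabla_V B)U$ together with the fixed Gauss equation $\det B = k + 1$: once $\mathscr{C}^{2,\alpha}$-convergence is obtained, iterated interior Schauder estimates bootstrap it to $\mathscr{C}^\infty$-convergence, providing the finite limit of every covariant derivative of $H_j$ at $p_*$ and hence the universal constant $N_{k,n}$. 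As the authors already warn, tracking the constants through the compactness and bootstrap procedure makes them non-explicit.
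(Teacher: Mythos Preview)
Your proof is correct and follows essentially the same approach as the paper's: both argue by contradiction, normalize a sequence of pointed $k$-surfaces via isometries of $\Hyp^3$, and then invoke Labourie's compactness criterion (in the form of \cite[Proposition 3.6]{bonsante2019induced}) to obtain $\mathscr{C}^\infty$ convergence contradicting the blow-up of the covariant derivatives of the mean curvature. The only cosmetic differences are that the paper first passes to a subsequence on which a single order $i \leq n$ blows up and phrases the compactness criterion in terms of proper isometric embeddings $\Hyp^2_k \to \Hyp^3$ rather than pointed $k$-surfaces, but the substance is identical.
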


\begin{proof}
    We briefly recall here the proof of this statement (which was stated in \cite{bonsante2019induced} for $n = 0$). $k$-\hsk surfaces satisfy the following compactness criterion:
    \begin{proposition}[{\cite[Proposition 3.6]{bonsante2019induced}}] \label{prop:compactness_ksurfaces}
        Let $k \in (-1,0)$, and consider $\mappa{f_n}{\Hyp^2_k}{\Hyp^3}$ a sequence of proper isometric embeddings of the hyperbolic plane $\Hyp^2_k$ with constant Gaussian curvature $k$. If there exists a point $p \in \Hyp^2$ such that $(f_n(p))_n$ is precompact, then there exists a subsequence of $(f_n)_n$ that converges $\mathscr{C}^\infty$-\hsk uniformly on compact sets to an isometric immersion $\mappa{f}{\Hyp^2_k}{\Hyp^3}$.
    \end{proposition}
    
    Fixed $k \in (-1,0)$ and $n \in \N$, assume by contradiction that there exists a sequence of Jordan curves $(c_m)_m$ such that the mean curvatures $H_m = H_{c_m, k}$ of the $k$-\hsk surfaces $\widetilde{\Sigma}_k(c_m)$ satisfy $\norm{H_m}_{\mathscr{C}^n(\widetilde{\Sigma}_k(c_m))} > m$. Up to extracting a subsequence, there exists an $i \leq n$ such that for every $m \in \N$
    \[
    \sup_{\widetilde{\Sigma}_k(c_m)} \norm{({}^k \nabla)^i H_m} > \frac{m}{n + 1} = C_n \, m.
    \]
    Now choose $q_m \in \widetilde{\Sigma}_k(c_m)$ for which the norm of $({}^k \nabla)^i H_m$ at $q_m$ is $\geq C_n \,m$. Since each component of $\widetilde{\Sigma}_k(c_m)$ is embedded and isometric to the hyperbolic plane $\Hyp^2_k$ (which is homogeneous), we can find a sequence of proper isometric embeddings $\mappa{f_m}{\Hyp^2_k}{\Hyp^3}$, parametrizing a component of $\widetilde{\Sigma}_k(c_m)$, such that $f_m(\bar{p}) = q_m$ for some fixed basepoint $\bar{p} \in \Hyp^2_k$. Up to post-\hsk composing $f_m$ by an isometry of $\Hyp^3$, we can also assume that $f_m(\bar{p}) = \bar{q}$ is fixed. In this way, we have found a sequence of proper isometric embeddings $\mappa{f_m}{\Hyp^2_k}{\Hyp^3}$ satisfying
    \begin{itemize}
        \item $f_m(\bar{p}) = \bar{q} \in \Hyp^3$ is independent of $m \in \N$;
        \item the mean curvature of the surfaces $f_m(\Hyp^2_k)$ at $\bar{q}$ has some $i$-\hsk th order derivative that is unbounded as $m$ goes to $\infty$.
    \end{itemize}
    This clearly contradicts the compactness criterion mentioned above.
\end{proof}

From this result we can now obtain a uniform control on $u_k$:

\begin{lemma} \label{lem:estimates_uk}
    Let $M$ be a convex co-\hsk compact hyperbolic $3$-\hsk manifold. Then the function $\mappa{u_k}{\partial M_k}{\R}$, solution of \eqref{eq:pde}, satisfies
    \[
    \frac{\max_{\partial M_k} H_k}{2 k} \leq u_k \leq \frac{\min_{\partial M_k} H_k}{2 k} = \frac{\sqrt{k + 1}}{k} < 0.
    \]
    Moreover, if $M$ has incompressible boundary, then there exists a constant $C_k > 0$ depending only on the intrinsic curvature $k \in (-1,0)$, and in particular not on the hyperbolic structure of $M$, such that
    \[
    \max_{\partial M_k} \norm{{}^k \nabla^2 u_k}_k \leq C_k .
    \]
\end{lemma}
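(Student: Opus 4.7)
I would prove the two statements separately.

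\emph{Pointwise bounds via the maximum principle.} Rewrite \eqref{eq:pde} in the form $\Delta_{\I_k} u_k + 2k u_k = H_k$. At a point $p_+ \in \partial M_k$ realizing $\max u_k$ one has $\Delta_{\I_k} u_k(p_+) \leq 0$, and hence
\[
2k\, u_k(p_+) = H_k(p_+) - \Delta_{\I_k} u_k(p_+) \geq \min_{\partial M_k} H_k ;
\]
dividing by the negative number $2k$ reverses the inequality, giving $u_k \leq u_k(p_+) \leq (\min H_k)/(2k)$. The symmetric argument at a minimum of $u_k$ yields $u_k \geq (\max H_k)/(2k)$. Finally, by the Gauss equation the principal curvatures $\lambda_1,\lambda_2$ of $\partial M_k$ satisfy $\lambda_1 \lambda_2 = K_e = k+1 > 0$, and both are positive by strict convexity, so $H_k = \lambda_1 + \lambda_2 \geq 2\sqrt{k+1}$ pointwise; dividing again by $2k < 0$ yields $(\min H_k)/(2k) \leq \sqrt{k+1}/k < 0$.

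\emph{Uniform $\mathscr{C}^2$ bound.} Since $M$ has incompressible boundary, every component of $\partial M_k$ is incompressible in $M$, and its universal cover embeds isometrically in $\Hyp^3$ as one of the two $k$-surfaces $\widetilde{\Sigma}_k^\pm(c)$ asymptotic to some Jordan curve $c \subset \partial_\infty \Hyp^3$; intrinsically, with respect to the rescaled metric $h_k = -k\,\I_k$, this lift is a copy of the standard model $(\Hyp^2, h)$ of curvature $-1$. Let $\tilde{u}_k$ and $\tilde{H}_k$ denote the lifts of $u_k$ and $H_k$. They satisfy the same PDE $L_k \tilde{u}_k = -k^{-1} \tilde{H}_k$ on $(\Hyp^2,h)$. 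By the first part, $\|\tilde{u}_k\|_{\mathscr{C}^0}$ is controlled by a constant depending only on $k$; by Proposition \ref{prop:bounds_mean_curvature} combined with the rescaling relations \eqref{eq:1} and \eqref{eq:2}, $\tilde{H}_k$ is bounded in $\mathscr{C}^{0,\alpha}$ by a constant depending only on $k$ and $\alpha$. Since $(\Hyp^2,h)$ is homogeneous, interior Schauder estimates for the uniformly elliptic operator $\Delta_h - 2$ on balls of a fixed radius $r > 0$ hold with a constant $C = C(r)$ independent of the basepoint:
\[
\|\tilde{u}_k\|_{\mathscr{C}^{2,\alpha}(\overline{B_{r/2}(p)})} \leq C \bigl( \|\tilde{u}_k\|_{\mathscr{C}^0(B_r(p))} + \|L_k \tilde{u}_k\|_{\mathscr{C}^{0,\alpha}(B_r(p))} \bigr) .
\]
Plugging in the universal controls above and taking the supremum over $p \in \Hyp^2$ produces a constant $C_k$ bounding $\|\nabla^2 \tilde{u}_k\|_h$, which equals $\|{}^k \nabla^2 u_k\|_k$ at corresponding points.

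\emph{Main obstacle.} The substantive difficulty is uniformity in $M \in \QD(M)$: on $\partial M_k$ itself there is no a priori lower bound on the injectivity radius as the hyperbolic structure varies, so elliptic estimates whose constants depend on the intrinsic geometry of $\partial M_k$ need not be uniform in $M$. Passing to the universal cover removes all dependence on the ambient manifold by reducing every estimate to one on the single fixed model space $(\Hyp^2, h)$, where homogeneity produces a geometry-independent Schauder constant. Incompressibility is what licenses this reduction, while Proposition \ref{prop:bounds_mean_curvature} — itself a consequence of Labourie's compactness criterion — is what keeps the right-hand side of \eqref{eq:pde} under universal control.
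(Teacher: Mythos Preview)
Your argument is correct and follows the same strategy as the paper: the maximum principle for the pointwise bounds, then a lift to the universal cover $(\Hyp^2,h)$ where homogeneity, together with Proposition~\ref{prop:bounds_mean_curvature}, makes the elliptic constants independent of $M$. The only technical variation is that you invoke interior Schauder estimates directly, whereas the paper runs $L^2$-Sobolev regularity ($H^2 \to H^4$ via Theorem~\ref{thm:regularity_pde}) and then the embedding $H^4 \hookrightarrow \mathscr{C}^2$; your route is marginally more direct but otherwise equivalent. One small omission: you establish $\min H_k \geq 2\sqrt{k+1}$, but the lemma asserts \emph{equality}; the paper handles this separately in Remark~\ref{rmk:about_the_bounds} by observing that the Hopf differential of the harmonic map $(\Sigma_k,\II_k)\to(\Sigma_k,\I_k)$ must vanish somewhere since $\chi(\Sigma_k)<0$, and at such zeros $H_k = 2\sqrt{k+1}$.
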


\begin{proof}
    The first assertion is an immediate consequence of the maximum principle applied to $u_k$ as a solution of the PDE \eqref{eq:pde}. Moreover, since the product of the principal curvatures (i. e. the eigenvalues of the shape operator) of a $k$-\hsk surface is everywhere equal to $k + 1$, the trace of the shape operator is bounded from below by $2 \sqrt{k + 1}$ (see Remark \ref{rmk:about_the_bounds} for an explanation of the equality $\min_{\partial M_k} H_k = 2 \sqrt{k + 1}$). 
    
    The proof of the second part of the assertion requires more care. Let $\Sigma_k$ be a connected component of the $k$-\hsk surface $\partial M_k$, and let $\widetilde{M} \cong \Hyp^3$ denote the universal cover of $M$. Since $M$ is a convex co-\hsk compact hyperbolic $3$-\hsk manifold with incompressible boundary, every component $\widetilde{\Sigma}_k$ of the preimage of $\Sigma_k$ in $\widetilde{M}$ is stabilized by a subgroup $\Gamma \cong \pi_1(\Sigma_k)$ of the fundamental group of $M$, acting by isometries on $\widetilde{M}$. Each of these subgroups $\Gamma$ is quasi-\hsk Fuchsian (see e. g. \cite[Corollary 4.112 and Theorem 8.17]{kapovich2001hyperbolic} for a proof of this assertion), and the surface $\widetilde{\Sigma}_k$ is a $k$-\hsk surface asymptotic to some Jordan curve in $\partial_\infty \widetilde{M} \cong \partial_\infty \Hyp^3$. In particular, by Proposition \ref{prop:bounds_mean_curvature}, we can find a universal constant $N_k = N_{2,k} > 0$ that satisfies
    \begin{equation} \label{eq:control_mean_curvature}
        \norm*{\tilde{H}_k}_{\mathscr{C}^2(\widetilde{\Sigma}_k)} \leq N_k .
    \end{equation}
    Here we stress that the constant $N_k$ does not depend on the hyperbolic structure of $M$, or $\Sigma_k$, but only on the value of $k \in (-1,0)$.
    
    Our goal is now to make use of this control to obtain a uniform bound of the norm of the Hessian of $u_k$. For this purpose, we will need the following classical result of regularity for linear elliptic differential equations:
    
    \begin{theorem}[{\cite[Theorem 2, page 314]{evans1998partial}}] \label{thm:regularity_pde}
        Let $m, n \in \N$ and $U \subset \R^n$ a bounded open set. We consider a differential operator $L$ of the form:
        \[
        L f = - \sum_{i, j = 1}^n a^{i j}(x) \, \partial^2_{x_i, x_j} f + \sum_{i = 0}^n b^i(x) \, \partial_{x_i} f + c(x) f ,
        \]
        where $a^{i j} = a^{j i}, b^i, c \in \mathscr{C}^{m + 1}(U,\R)$. Assume that $L$ is uniformly elliptic, i. e. there exists a constant $\varepsilon > 0$ such that $\sum_{i, j} a^{i j}(x) v_i v_j \geq \varepsilon \norm{v}^2$ for all $v \in \R^n$ and $x \in U$. If $f \in H^1(U)$ is a weak solution of the equation $L f = \lambda$, for some $\lambda \in H^m(U)$, then for every bounded open set $V$ with closure contained in $U$, there exists a constant $C$, depending only on $m$, $U$, $V$ and the functions $a^{i j}, b^i, c$, such that
        \[
        \norm{f}_{H^{m + 2}(V)} \leq C (\norm{\lambda}_{H^m(U)} + \norm{f}_{L^2(U)}) ,
        \]
        where the Sobolev spaces $H^{m + 2}(V)$, $H^m(U)$, and $L^2(U)$ are defined with respect to the Euclidean metric of $U \subset \R^n$.
    \end{theorem}
	
    The surface $\widetilde{\Sigma}_k$ endowed with the lift of the hyperbolic metric $h_k$ of $\Sigma_k$ is isometric to the hyperbolic plane $\Hyp^2$. In the rest of the proof, we will identify $\widetilde{\Sigma}_k$ with the Poincar\'e disk model $\Hyp^2 \defin (B_1, g)$, where $B_1$ is the Euclidean ball of radius $1$ and center $0$ in $\C$, and $g$ is the Riemannian metric
    \[
    g = \left( \frac{2}{1 - \abs{z}^2} \right)^2 \abs{\dd{z}}^2 .
    \]
    Now we choose $U$ and $V$ to be the $g$-\hsk geodesic balls of center $0 \in B_1$ and hyperbolic radius equal to $2$ and $1$, respectively. The lift of the operator $- L_k$ over $U$ is clearly uniformly elliptic, because of the compactness of $\overline{U}$ and its expression in coordinates:
    \[
    - L_k f = - g^{i j} ( \partial_{i j}^2 f - \Gamma_{i j}^h(g) \, \partial_h f) + 2 f ,
    \]
    where $\Gamma_{i j}^h(g)$ denote the Christoffel symbols of $g$. Again by the compactness of $\overline{U}$ and $\overline{V}$, the norms of the Sobolev spaces $\norm{\cdot}_{H^j(U)}$ and $\norm{\cdot}_{H^j(V)}$, computed with respect to the flat connection of $B_1 \subset \R^2$ and the Euclidean volume form, are equivalent to the norms of the corresponding Sobolev spaces defined using the Levi-\hsk Civita connection of $g$ and the $g$-\hsk volume form. Moreover, the bi-\hsk Lipschitz constants involved in the equivalence only depend on a bound of the $\mathscr{C}^{j + 1}$-\hsk norm of $g$ over $U$, therefore they can be chosen to depend only on $j \in \N$. From now on, we will always consider the norms on the spaces $H^j(U)$ and $H^j(V)$ to be defined using the metric $g$ and its connection. 
    
    Now we apply Theorem \ref{thm:regularity_pde} to $m = n = 2$, the operator $- L_k$ and the functions $f = \tilde{u}_k$, $\lambda = - k^{-1} \tilde{H}_k$, where $\tilde{F}$ denotes the lift of the function $F$ over $\widetilde{\Sigma}_k$: we can find a universal constant $C > 0$ (depending only on the open sets $U$, $V$, that we chose once for all, and on the metric $g |_U$) such that:
    \[
    \norm{\tilde{u}_k}_{H^4(V)} \leq C ( - k^{-1} \norm*{\tilde{H}_k}_{H^2(U)} + \norm{\tilde{u}_k}_{L^2(U)} ) .
    \]
    By the first part of Lemma \ref{lem:estimates_uk}, $\norm{\tilde{u}_k}_{\mathscr{C}^0(U)} \leq - (2 k)^{-1} \norm*{\tilde{H}_k}_{\mathscr{C}^0(\Hyp^2)}$. In addition, we have
    \[
    \norm{\tilde{u}_k}_{L^2(U)} \leq \Area(U,g)^{1/2} \norm{\tilde{u}_k}_{\mathscr{C}^0(U)} \leq - (2 k)^{-1} \Area(U,g)^{1/2} \norm*{\tilde{H}_k}_{\mathscr{C}^0(\Hyp^2)} ,
    \]
    and 
    \[
    \norm*{\tilde{H}_k}_{H^2(U)} \leq \Area(U,g)^{1/2} \norm*{\tilde{H}_k}_{\mathscr{C}^2(\Hyp^2)} .
    \]
    In conclusion, we deduce that
    \[
    \norm{\tilde{u}_k}_{H^4(V)} \leq - 2 k^{-1} C \Area(U, g)^{1/2} \norm*{\tilde{H}_k}_{\mathscr{C}^2(\Hyp^2)} .
    \]
    By the Sobolev embedding theorem (see e. g. \cite[Corollary 9.13, page 283]{brezis2011functional}), given $W$ an open set satisfying $0 \in W \subset \overline{W} \subset V$, the $\mathscr{C}^2(W)$-\hsk norm of $\tilde{u}_k$ (again, computed with respect to the Levi-\hsk Civita connection of $g$) is controlled by a multiple of its $H^4$-\hsk norm over $V$,  and the multiplicative factor depends only on $W$ and $V$. Therefore, if we choose for instance $W = B_{\Hyp^2}(0,1/2)$ we get:
    \[
    \norm*{{}^k \nabla^2 \tilde{u}_k}_{\mathscr{C}^0(W)} \leq C'(k) \, \norm*{\tilde{H}_k}_{\mathscr{C}^2(\Hyp^2)} .
    \]
    Now the desired statement easily follows. From relation \eqref{eq:control_mean_curvature} and the last inequality, we obtain a uniform bound of the Hessian of $\tilde{u}_k$ over $W \ni 0$. Let now $q$ be any other point of $\Hyp^2$, and choose a $g$-\hsk isometry $\mappa{\varphi_q}{B_1}{B_1}$ such that $\varphi_q(0) = q$. If we replace $\tilde{u}_k$ and $\tilde{H}_k$ with $\tilde{u}_k \circ \varphi_q$ and $\tilde{H}_k \circ \varphi_q$, respectively, the exact same argument above applies, since the operator $L_k$ and the norms $\norm{\cdot}_{H^j}$, $\norm{\cdot}_{\mathscr{C}^l}$ are invariant under the action of the isometry group of $\Hyp^2$ (and since $\norm*{\tilde{H}_k}_{\mathscr{C}^2(\Hyp^2)} = \norm*{\tilde{H}_k \circ \varphi_q}_{\mathscr{C}^2(\Hyp^2)}$ ). In particular, this gives us a control of the norm of ${}^k \nabla^2 \tilde{u}_k$ over $\varphi_q(W)$ for any point $q \in \Hyp^2$, and the last part of our assertion follows.
\end{proof}

\begin{remark} \label{rmk:about_the_bounds}
    The minimum of the mean curvature $2 \sqrt{k + 1}$ is always realized. As described by Labourie in \cite{labourie1992surfaces}, whenever we have a $k$-\hsk surface $\Sigma_k$ with first and second fundamental forms $\I_k$ and $\II_k$, respectively, the identity map $\mappa{\id}{(\Sigma_k, \II_k)}{(\Sigma_k, \I_k)}$ is harmonic, with Hopf differential $\psi_k$ satisfying
    \[
    2 \Re \psi_k = \I_k - \frac{H_k}{2(k + 1)} \II_k .
    \]
    Its squared norm with respect to $\II_k$ can be expressed as follows
    \[
    \norm{2 \Re \psi_k}_{\II_k}^2 = \frac{H_k^2 - 4(k + 1)}{(k + 1)^2} .
    \]
    In particular, at each zero of $\psi_k$ (which necessarily exist because $\chi(\Sigma_k) < 0$) we have $H_k = 2 \sqrt{k + 1}$.
    
    We stress that, even if the maximum of the mean curvature $H_k$ will clearly depend on the hyperbolic structure of $M$, Proposition \ref{prop:bounds_mean_curvature} guarantees that $\max H_k$ is controlled by a function of $k$ independent on the geometry of $M$, as long as $\partial M$ is incompressible.
    
    We will make use of the upper bound $u_k \leq \frac{\sqrt{k + 1}}{k}$ in Lemma \ref{lem:lower_bound_gradient}, where we will determine a lower bound of the Weil-\hsk Petersson norm of the differential of $V_k^*$ in terms of the integral of the mean curvature.
\end{remark}

\section{The gradient of the dual volume} \label{sec:gradient_dual_volume}

The aim of this section is to describe the gradient of the dual volume function $V_k^*$ with respect to the Weil-\hsk Petersson metric on the Teichm\"uller space of $\partial M_k$ in terms of the function $u_k$ studied in the previous section. 

\vspace{1em}

The first order variation of the dual volume of $M_k$ as we vary the convex co-\hsk compact hyperbolic structure of $M$ can be computed applying the \emph{differential Schl\"afli formula} due to Rivin and Schlenker \cite{schlenker_rivin2000schlafli}. In particular, we have:

\begin{proposition} \label{prop:dual_diiferential_schlafli}
    \begin{align*}
        \dd{(V_k^* \circ T_k)}(\dot{M}) & = \frac{1}{4} \int_{\partial M_k} \scall{\dot{\I}_k}{\II_k - H_k \, \I_k}_{\I_k} \dd{a_{\I_k}} \\
        & = \frac{1}{4} \int_{\partial M_k} \scall{\dot{h}_k}{\II_k + k^{-1} H_k \, h_k}_{h_k} \dd{a_{h_k}} ,
    \end{align*}
    where $\dot{I}_k = - k^{-1} \dot{h}_k$ is the first order variation of the first fundamental form on $\partial M_k$ along the variation $\dot{M}$, and $\mappa{T_k}{\QD(M)}{\Teich(\partial M_k)}$ is the diffeomorphism introduced in Section \ref{subsec:deform_spaces}.
\end{proposition}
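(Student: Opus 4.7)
The plan is to derive the formula as a direct consequence of the Rivin-Schlenker differential Schl\"afli formula \cite{schlenker_rivin2000schlafli} applied to the smooth, strictly convex region $M_k$ (strict convexity follows from $K_e = k+1 > 0$), combined with a direct calculation of the variation of the boundary mean-curvature integral appearing in the definition $V_k^* = V(M_k) - \frac{1}{2}\int_{\partial M_k} H_k \, da_{\I_k}$. The conceptual point is that although $\partial M_k$ moves with the deformation (since the $k$-surface condition $K_i = k$ is intrinsic), the Schl\"afli formula is designed precisely for this setting, recording the variation of the enclosed volume purely in terms of the induced boundary data $(\dot{\I}, \dot{\II}, \dot{H})$; smoothness of these variations in $\dot{M}$, which is needed to make sense of the right-hand side, is guaranteed by the $\mathscr{C}^1$-dependence of the $k$-surface foliation on the underlying hyperbolic structure (Theorem \ref{thm:ksurfaces_homeo} and the work of Labourie).

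For a smooth convex region $N$ in a varying family of hyperbolic $3$-manifolds, the Rivin-Schl\"afli formula reads
\[
dV(N) = \tfrac{1}{2}\int_{\partial N} \dot{H}\, da_{\I} + \tfrac{1}{4}\int_{\partial N} \langle \dot{\I}, \II\rangle_{\I}\, da_{\I}.
\]
Using in parallel the standard identity $d(da_{\I_k}) = \tfrac{1}{2}\langle \I_k, \dot{\I}_k\rangle_{\I_k}\, da_{\I_k}$, one computes directly
\[
d\Bigl(\tfrac{1}{2}\int_{\partial M_k} H_k\, da_{\I_k}\Bigr) = \tfrac{1}{2}\int_{\partial M_k} \dot{H}_k\, da_{\I_k} + \tfrac{1}{4}\int_{\partial M_k} H_k \langle \I_k, \dot{\I}_k\rangle_{\I_k}\, da_{\I_k}.
\]
Subtracting from $dV(M_k)$ the $\dot{H}_k$ contributions cancel, and the remaining $\dot{\I}_k$ terms combine to yield $dV_k^*(\dot{M}) = \tfrac{1}{4}\int_{\partial M_k}\langle \dot{\I}_k, \II_k - H_k \I_k\rangle_{\I_k}\, da_{\I_k}$, which is the first asserted identity.

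For the second displayed equality, substitute $\I_k = -k^{-1} h_k$ (so $\dot{\I}_k = -k^{-1}\dot{h}_k$) and apply the rescaling rules \eqref{eq:1}-\eqref{eq:2} with $\lambda = -k^{-1}$: one has $\langle \sigma, \tau\rangle_{\I_k} = k^2 \langle \sigma, \tau\rangle_{h_k}$, $da_{\I_k} = -k^{-1}\, da_{h_k}$, and $H_k \I_k = -k^{-1} H_k h_k$. The accumulated scalar factor is $(-k^{-1})\cdot k^2 \cdot (-k^{-1}) = 1$, preserving the $1/4$, and the tensor $\II_k - H_k \I_k$ becomes $\II_k + k^{-1} H_k h_k$ in the $h_k$-formulation. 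The principal difficulty in this proof is not any calculation but the conceptual step of invoking the Schl\"afli formula correctly for a boundary surface that moves intrinsically with the deformation; once the formula is in hand, the rest is routine bookkeeping.
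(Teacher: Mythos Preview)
Your proof is correct and follows the same approach as the paper, which does not give an in-text proof but refers to \cite[Proposition~2.5]{mazzoli2018the_dual}, where the identity is likewise obtained by combining the Rivin--Schlenker differential Schl\"afli formula with the variation of $\tfrac{1}{2}\int_{\partial M_k} H_k\,da_{\I_k}$. Your rescaling computation from $\I_k$ to $h_k$ is also carried out correctly.
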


A proof of this relation based on the result of Rivin and Schlenker can be found in \cite[Proposition 2.5]{mazzoli2018the_dual}. From its variation formula, we can give an explicit description of the Weil-\hsk Petersson gradient of the dual volume function $V_k^*$, which will turn out to be useful for the study of its flow.

\begin{proposition} \label{prop:gradient_dual_volume}
    The vector field $\grd_\WP V_k^*$ is represented by the symmetric $2$-\hsk tensor $2 \Re \phi_k$, where $\phi_k$ is the (unique) holomorphic quadratic differential satisfying
    \[
    \Re \phi_k = \II_k - {}^k \nabla^2 u_k + u_k \, h_k ,
    \]
    where $u_k$ denotes the solution of equation \eqref{eq:pde}.
\end{proposition}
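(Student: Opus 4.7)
The plan is to apply the differential Schläfli formula of Proposition \ref{prop:dual_diiferential_schlafli}, which expresses $\dd{(V_k^* \circ T_k)}(\dot{M})$ as the $h_k$-pairing of $\dot{h}_k$ against the symmetric tensor $T \defin \II_k + k^{-1} H_k \, h_k$, and then to extract the transverse-traceless representative of $T$ with respect to $h_k$. In the spirit of the Fischer-Marsden decomposition, I would write
\[
T = \Re \phi_k + {}^k \nabla^2 u_k + (k^{-1} H_k - u_k) \, h_k, \qquad \Re \phi_k \defin \II_k - {}^k \nabla^2 u_k + u_k \, h_k.
\]
The middle summand equals $\tfrac{1}{2} \mathcal{L}_{\grd_{h_k} u_k} h_k$, hence integrates to zero against any $h_k$-divergence-free tensor after one integration by parts; the last summand is a function times $h_k$, hence pairs pointwise to zero with any $h_k$-traceless tensor. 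Since Lemma \ref{lemma:relations_between_norms} identifies $T_m \Teich(\partial M_k)$ with $S^{tt}_2(\partial M_k, h_k)$, once one checks that $\Re \phi_k$ itself is transverse-traceless, the Schläfli formula reduces to
\[
\dd{(V_k^* \circ T_k)}(\dot{M}) = \frac{1}{4} \int_{\partial M_k} \scall{\dot{h}_k}{\Re \phi_k}_{h_k} \dd{a_{h_k}}.
\]

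The core step is then the verification of the transverse-traceless condition on $\Re \phi_k$, and this is where the defining equation \eqref{eq:pde} enters decisively. For the trace, the rescaling formulas \eqref{eq:2} give $\tr_{h_k} \II_k = -k^{-1} H_k$, so
\[
\tr_{h_k} \Re \phi_k = -k^{-1} H_k - \Delta_{h_k} u_k + 2 u_k = -(L_k u_k + k^{-1} H_k),
\]
which vanishes by the very definition of $u_k$. For the divergence, the Codazzi equation for the shape operator yields $\divr_{\I_k} \II_k = \dd{H_k}$, hence $\divr_{h_k} \II_k = -k^{-1} \dd{H_k}$ by \eqref{eq:1}; the Bochner-type identity $\divr_{h_k} {}^k \nabla^2 u_k = \dd{(\Delta_{h_k} u_k)} + \Ric_{h_k}(\grd u_k, \cdot)$, combined with $\Ric_{h_k} = -h_k$ (Gaussian curvature $-1$), yields $\divr_{h_k} {}^k \nabla^2 u_k = \dd{(\Delta_{h_k} u_k)} - \dd{u_k}$; and $\divr_{h_k}(u_k h_k) = \dd{u_k}$ by parallelism of the metric. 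Summing,
\[
\divr_{h_k} \Re \phi_k = -\dd{\bigl(k^{-1} H_k + \Delta_{h_k} u_k - 2 u_k \bigr)} = -\dd{(L_k u_k + k^{-1} H_k)} = 0,
\]
again by \eqref{eq:pde}. Hence $\Re \phi_k \in S^{tt}_2(\partial M_k, h_k)$, and the holomorphic quadratic differential $\phi_k$ with this real part is unique.

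To identify $\grd_\WP V_k^*$ with $2 \Re \phi_k$, I would choose a transverse-traceless representative $\dot{h}_k = 2 \Re \psi$ of $\dot{m} \in T_m \Teich(\partial M_k)$ and polarize the identity $\norm{\dot m}_\WP^2 = \tfrac{1}{2} \norm{\Re \psi}_{\textit{FT}, h_k}^2$ of Lemma \ref{lemma:relations_between_norms} to obtain $\scal{2 \Re \phi_k}{\dot{h}_k}_\WP = \tfrac{1}{4} \int_{\partial M_k} \scall{\dot{h}_k}{\Re \phi_k}_{h_k} \dd{a_{h_k}}$, matching the Schläfli expression above. The main obstacle here is not conceptual but a matter of careful bookkeeping: tracking the conformal factors between $\I_k$ and $h_k = -k\,\I_k$ via \eqref{eq:1}--\eqref{eq:2}, and the multiplicative constant relating the Weil-Petersson and Fischer-Tromba inner products on $S^{tt}_2$. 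The PDE \eqref{eq:pde} has been engineered precisely so that both the trace and the divergence computations above telescope to zero.
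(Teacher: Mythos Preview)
Your proposal is correct and follows essentially the same route as the paper: the same decomposition $\II_k + k^{-1} H_k h_k = (\II_k - {}^k\nabla^2 u_k + u_k h_k) + \tfrac{1}{2}\Dlie_{\grd_{h_k} u_k} h_k + (k^{-1} H_k - u_k) h_k$, the same verification of the transverse-traceless condition via the Codazzi equation, the identity $\divr_g({}^g\nabla^2 f) = \dd(\Delta_g f) + \Ric_g(\grd_g f,\cdot)$, and the PDE \eqref{eq:pde}, and the same appeal to Lemma~\ref{lemma:relations_between_norms} for the identification of the gradient. The only cosmetic difference is that you spell out the polarization step for the Weil--Petersson constant a bit more explicitly than the paper does.
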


\begin{proof}
    Let $\dot{m}_k$ denote a tangent vector to the Teichm\"uller space of $\partial M_k$ at $m_k$. As described in Section \ref{subsec:deform_spaces}, given any hyperbolic metric $h_k$ representing the isotopy class $m_k \in \Teich(\partial M_k)$, we can find a unique transverse traceless tensor $\dot{h}_k \in S_2^{tt}(\Sigma,h_k)$ representing $\dot{m}_k$. Assume for a moment that we can find a decomposition of the symmetric tensor $\II_k + k^{-1} H_k \, h_k$ of the following form:
    \[
    \II_k + k^{-1} H_k h_k = S_{tt} + \Dlie_X h_k + \lambda \, h_k ,
    \]
    where $S_{tt}$ is a transverse traceless tensor with respect to $h_k$, $\Dlie_X h_k$ is the Lie derivative of $h_k$ with respect to a vector field $X$, and $\lambda$ is a smooth function on $\partial M_k$. Then, by Proposition \ref{prop:dual_diiferential_schlafli}, we could express the variation of the dual volume $V_k^*$ along a transverse traceless variation $\dot{h}_k$ as follows:
    \[
    \dd{V_k^*}(\dot{h}_k) = \frac{1}{4} \int_{\partial M_k} \scall{\dot{h}_k}{S_{tt} + \Dlie_X h_k + \lambda \, h_k}_{h_k} \dd{a_{h_k}} .
    \]
    Since $\dot{h}_k$ is traceless, the scalar product $(\dot{h}_k, h_k)_{h_k} = \tr_{h_k}(\dot{h}_k)$ vanishes identically. Moreover, the $L^2$-\hsk scalar product between $\dot{h}_k$ and $\Dlie_X h_k$ vanishes too, because $\Dlie_X h_k$ is tangent to the orbit of $h_k$ by the action of $\mathrm{Diff}_0(\Sigma)$ (see Section \ref{subsec:norms_on_TT}). In particular, we must have
    \[
    \dd{V_k^*}(\dot{h}_k) = \frac{1}{4} \int_{\partial M_k} \scall{\dot{h}_k}{S_{tt}}_{h_k} \dd{a_{h_k}} = \frac{1}{8} \scall{\dot{h}_k}{2 S_{t t}}_{\textit{FT},h_k}.
    \]
    In light of Lemma \ref{lemma:relations_between_norms}, by varying the tangent vector $\dot{m}_k \in T_{m_k} \Teich(\partial M_k)$, we deduce that the tensor $2 S_{tt}$ is the element of $S^{tt}_2(\Sigma, h_k)$ that represents $\grd_{\WP} V_k^*$.
    
    In conclusion, this argument shows us that, in order to prove our assertion, we need to determine a decomposition of the tensor $\II_k + k^{-1} H_k \, h_k$ of the form we described above, with $S_{tt} = \II_k - {}^k \nabla^2 u_k + u_k \, h_k$. For this purpose, we consider the following expression:
    \begin{align*}
        \II_k + k^{-1} H_k \, h_k & = (\II_k - {}^k \nabla^2 u_k + u_k \, h_k) + {}^k \nabla^2 u_k + ( k^{-1} H_k - u_k ) \, h_k \\
        & = (\II_k - {}^k \nabla^2 u_k + u_k \, h_k) + \frac{1}{2} \Dlie_{\grd_{h_k} u_k} h_k + ( k^{-1} H_k - u_k ) \, h_k ,
    \end{align*}
    where we used the relation $\Dlie_{\grd_{h_k} u_k} h_k = 2 \, {}^k \nabla^2 u_k$. In this expression, the second term of the sum is of the type $\Dlie_X h_k$, while the third term has the form $\lambda \, h_k$. Therefore, by the argument above, it is enough to show that the first term is $h_k$-\hsk traceless and $h_k$-\hsk divergence-\hsk free. The trace of $\II_k - {}^k \nabla^2 u_k + u_k \, h_k$ satisfies
    \[
    \tr_{h_k}(\II_k - {}^k \nabla^2 u_k + u_k \, h_k) = - k^{-1} H_k - \Delta_k u_k + 2 u_k .
    \]
    This expression vanishes because $u_k$ is a solution of equation \eqref{eq:pde}. In order to compute the divergence of our tensor, we will need the following relations:
    \[
    \divr_{h_k} \II_k = - k^{-1} \dd{H_k}, \qquad \divr_g({}^g \nabla^2 f) = \dd(\Delta_g f) + \Ric_g(\grd_g f, \cdot ) .
    \]
    The first equality follows from the Codazzi equation $({}^k \nabla_X B_k)Y = ({}^k \nabla_Y B_k)X$ satisfied by the shape operator $B_k$ of $\partial M_k$ (the Levi-\hsk Civita connections of $h_k$ and the first fundamental form $\I_k$ are the same, since they differ by a multiplicative constant). The second relation is true for any Riemannian metric $g$, and we will apply it in the case $g = h_k$ and $f = u_k$. Since $h_k$ is a hyperbolic metric on a $2$-\hsk manifold, we have $\Ric_{h_k} = - h_k$. Therefore
    \begin{align*}
        \divr_{h_k} (\II_k - \nabla^2_k u_k + u_k \, h_k) & = - k^{-1} \dd{H_k} - \dd(\Delta_k u_k) + \dd{u_k} + \dd{u_k} \\
        & = \dd(- k^{-1} H_k - \Delta_k u_k + 2 u_k ) ,
    \end{align*}
    where we used the relation $\divr_g (f \, g) = \dd{f}$. Again, the expression above vanishes because $u_k$ solves equation \eqref{eq:pde}. Then we have shown that $\II_k - {}^k \nabla^2 u_k + u_k \, h_k$ is a transverse traceless tensor, as desired.
\end{proof}

\begin{remark}
    In fact, the decomposition we presented for the tensor $\II_k + k^{-1} H_k \, h_k$ is related to the orthogonal decomposition of the space of symmetric tensors due to Fischer and Marsden \cite{fischer1975deformations}. Given $g$ a hyperbolic metric, every symmetric tensor $S$ admits an orthogonal decomposition of the following form:
    \[
    S = S_{tt} + \Dlie_X g + ((- \Delta_g f + f) \, g + {}^g \nabla^2 f ) ,
    \]
    where:
    \begin{itemize}
        \item $S_{tt}$ is transverse traceless with respect to $g$;
        \item $S_{tt} + \Dlie_X g$ is tangent to the space of Riemannian metrics with constant Gaussian curvature equal to $-1$. In other words, if $g' \mapsto K(g')$ denotes the operator that associates to the Riemannian metric $g'$ its Gaussian curvature, then $S_{tt} + \Dlie_X g \in \ker \dd{K_g}$;
        \item $(- \Delta_g f + f) \, g + {}^g \nabla^2 f$ lies in the $L^2$-\hsk orthogonal of $\ker \dd{K_g}$.
    \end{itemize}
    Then, the expression
    \begin{align*}
        \II_k + k^{-1} H_k h_k & = (\II_k - {}^k \nabla^2 u_k + u_k \, h_k) + 0 + (( k^{-1} H_k - u_k ) \, h_k + {}^k \nabla^2 u_k) \\
        & = (\II_k - {}^k \nabla^2 u_k + u_k \, h_k) + 0 + (( - \Delta_k u_k + u_k ) \, h_k + {}^k \nabla^2 u_k)
    \end{align*}
    is the Fischer-\hsk Marsden decomposition of $\II_k + k^{-1} H_k \, h_k$, where $f = u_k$, $X = 0$ and $S_{tt} = (\II_k - {}^k \nabla^2 u_k + u_k \, h_k)$.
\end{remark}

Using this explicit description of the Weil-\hsk Petersson gradient of the dual volume function $V_k^*$, we can determine a lower bound of its norm in terms of the integral of the mean curvature:

\begin{lemma} \label{lem:lower_bound_gradient}
    For every $k \in (-1,0)$ we have
    \[
    \norm{\dd{V_k^*}}_\WP^2 \geq - \frac{\sqrt{k + 1}}{2 k} \int_{\partial M_k} H_k \dd{a_{\I_k}} - \frac{2 \pi (k + 1)}{k^2} \abs{\chi(\partial M)} .
    \]
\end{lemma}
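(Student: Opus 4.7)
By Proposition \ref{prop:gradient_dual_volume} the Weil–Petersson gradient of $V_k^*$ at $m_k = T_k(M)$ is represented by $2\Re\phi_k$, with $T_k \defin \Re\phi_k = \II_k - {}^k\nabla^2 u_k + u_k\, h_k \in S^{tt}_2(\partial M_k,h_k)$. By Lemma \ref{lemma:relations_between_norms} it follows that
\[
    \norm{\dd V_k^*}_\WP^2 \;=\; \frac{1}{2}\int_{\partial M_k}\norm{T_k}_{h_k}^2\,\dd a_{h_k}.
\]
My plan is to compute the right–hand side exactly in closed form, obtaining the term $-\tfrac{4\pi(k+1)}{k^2}\abs{\chi(\partial M)}$ plus a single integral of $H_k u_k$, and then to bound this last integral from below by means of the pointwise estimate on $u_k$ from Lemma \ref{lem:estimates_uk}.

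\textbf{The key computation.} Because $T_k$ is $h_k$–traceless and $h_k$–divergence–free, pairing with $h_k$ gives zero and integration by parts against $\nabla^2 u_k$ gives
$\int\scal{T_k}{\nabla^2 u_k}_{h_k}\dd a_{h_k}= -\int\scal{\divr_{h_k}T_k}{\nabla u_k}_{h_k}\dd a_{h_k}=0$. Hence
\[
    \int\norm{T_k}_{h_k}^2\dd a_{h_k} \;=\; \int\scal{T_k}{\II_k}_{h_k}\dd a_{h_k} \;=\; \int\!\norm{\II_k}_{h_k}^2\dd a_{h_k} - \int\scal{\nabla^2 u_k}{\II_k}_{h_k}\dd a_{h_k} - k^{-1}\!\int H_k u_k\,\dd a_{h_k}.
\]
I will evaluate the three pieces separately:
\begin{itemize}
\item Using $\norm{\II_k}_{h_k}^2 = k^{-2}(H_k^2-2(k+1))$ (from $h_k = -k\,\I_k$ and $\tr B_k^2 = H_k^2-2K_e = H_k^2-2(k+1)$) together with Gauss–Bonnet $\Area(\partial M_k,h_k) = -2\pi\chi(\partial M)$, one gets $\int\norm{\II_k}^2\dd a_{h_k} = k^{-2}\!\int H_k^2\dd a_{h_k} -\tfrac{4\pi(k+1)}{k^2}\abs{\chi(\partial M)}$.
\item Codazzi gives $\divr_{h_k}\II_k = -k^{-1}\dd H_k$, so two integrations by parts yield $\int\scal{\nabla^2 u_k}{\II_k}_{h_k}\dd a_{h_k} = -k^{-1}\!\int H_k(\Delta_k u_k)\dd a_{h_k}$; substituting the PDE \eqref{eq:pde} in the form $\Delta_k u_k = 2u_k - k^{-1}H_k$ converts this to $-2k^{-1}\!\int u_k H_k\dd a_{h_k} + k^{-2}\!\int H_k^2\dd a_{h_k}$.
\end{itemize}
The $\int H_k^2$ terms cancel and the $u_k H_k$ terms combine, leaving
\[
    \int\norm{T_k}_{h_k}^2\dd a_{h_k} \;=\; k^{-1}\!\int_{\partial M_k}H_k u_k\,\dd a_{h_k} \;-\; \frac{4\pi(k+1)}{k^2}\abs{\chi(\partial M)}.
\]

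\textbf{Closing the estimate.} From Lemma \ref{lem:estimates_uk} we have $u_k\le\tfrac{\sqrt{k+1}}{k}<0$ pointwise on $\partial M_k$; since $H_k\ge 0$, $H_k u_k\le \tfrac{\sqrt{k+1}}{k}H_k$, and multiplying by $k^{-1}<0$ reverses the inequality, giving $k^{-1}H_k u_k\ge \tfrac{\sqrt{k+1}}{k^2}H_k$. Integrating and converting the area element via $\dd a_{h_k} = (-k)\dd a_{\I_k}$ produces
\[
    k^{-1}\!\int_{\partial M_k}H_k u_k\,\dd a_{h_k} \;\ge\; -\frac{\sqrt{k+1}}{k}\int_{\partial M_k} H_k\,\dd a_{\I_k}.
\]
Multiplying the resulting bound for $\int\norm{T_k}^2\dd a_{h_k}$ by $\tfrac12$ yields exactly the desired inequality.

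\textbf{Expected obstacle.} The only real delicate point is sign–tracking: $k<0$ and $u_k<0$, so each integration by parts and each inequality has to be carried out paying close attention to signs; in particular the equivalent expression $\int\norm{T_k}_{h_k}^2\dd a_{h_k} = k^{-1}\!\int H_k u_k\dd a_{h_k} - \tfrac{4\pi(k+1)}{k^2}\abs{\chi(\partial M)}$ is not manifestly non–negative, so some care is needed to make sure the combination of integration by parts, Gauss–Bonnet, and the PDE cancels out correctly.
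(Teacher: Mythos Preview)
Your argument is correct and is in fact cleaner than the paper's own proof. Both arrive at the same exact integral identity
\[
\norm{\dd V_k^*}_\WP^2 \;=\; -\tfrac{1}{2}\!\int_{\partial M_k} u_k H_k\,\dd a_{\I_k}\;-\;\tfrac{2\pi(k+1)}{k^2}\abs{\chi(\partial M)} ,
\]
and then apply the pointwise bound $u_k\le \sqrt{k+1}/k$ from Lemma~\ref{lem:estimates_uk}. The difference lies in how that identity is reached. The paper expands $\norm{\II_k-\nabla^2 u_k+u_k h_k}_{\I_k}^2$ pointwise into six terms, invokes Bochner's formula to handle $\norm{\nabla^2 u_k}^2$, and tracks everything ``modulo divergence'' until the PDE \eqref{eq:pde} collapses the expression to $k\,u_k H_k-2(k+1)+\divr W$. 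You instead exploit the transverse--traceless structure of $\Re\phi_k$ from the outset: tracelessness kills the pairing with $h_k$, divergence--freeness kills the pairing with $\nabla^2 u_k$ after one integration by parts, so $\int\norm{\Re\phi_k}^2=\int\scall{\Re\phi_k}{\II_k}$, and a single further use of Codazzi plus the PDE finishes the job. Your route avoids Bochner's formula entirely and makes the cancellation of the $\int H_k^2$ terms transparent rather than buried in a longer computation.

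Two minor remarks. First, you reuse the symbol $T_k$ for $\Re\phi_k$, which in the paper already denotes the parametrisation map $\QD(M)\to\Teich(\partial M_k)$; pick a different letter. Second, the ``expected obstacle'' you flag is not really an obstacle: the identity $\int\norm{\Re\phi_k}_{h_k}^2\dd a_{h_k}=k^{-1}\!\int H_k u_k\,\dd a_{h_k}-\tfrac{4\pi(k+1)}{k^2}\abs{\chi}$ is automatically non-negative because the left side is, and indeed $k^{-1}H_k u_k\ge 0$ pointwise (both factors after $k^{-1}$ are non-positive), so there is no hidden sign trap.
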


\begin{proof}
    In what follows, we will prove the following expression:
    \begin{equation} \label{eq:decomposition_wp_norm}
        \norm{\II_k - \nabla^2_k u_k + u_k \, h_k}_{\I_k}^2 = k u_k H_k - 2(k + 1) + \divr_{\I_k} W,
    \end{equation}
    for some tangent vector field $W$ on $\partial M_k$. Assuming for the moment this relation, we deduce that
    \begin{align*}
        \norm{\dd{V_k^*}}^2_\WP & = \frac{1}{2} \int_{\partial M_k} \norm{\Re \phi_k}_{h_k}^2 \dd{a_{h_k}} \tag{Prop. \ref{prop:gradient_dual_volume} and Lemma \ref{lemma:relations_between_norms}} \\
        & = \frac{1}{2} \int_{\partial M_k} (-k)^{-2} \, \norm{\Re \phi_k}_{\I_k}^2 \, (- k) \dd{a_{\I_k}} \\
        & = - \frac{1}{2 k} \int_{\partial M_k} (k u_k H_k - 2(k + 1)) \dd{a_{\I_k}} \tag{relation \eqref{eq:decomposition_wp_norm}} ,
    \end{align*}
    where we used that $h_k = (- k) \I_k$ and relations \eqref{eq:1}, \eqref{eq:2}, and that the integral of the term $\divr_{\I_k} W$ vanishes by the divergence theorem. By Lemma \ref{lem:estimates_uk}, we have $u_k \leq \frac{\sqrt{k + 1}}{k}$, therefore we obtain
    \[
    \norm{\dd{V_k^*}}^2_\WP \geq - \frac{\sqrt{k + 1}}{2 k} \int_{\partial M_k} H_k \dd{a_{\I_k}} - \frac{2 \pi (k + 1)}{k^2} \abs{\chi(\partial M)} ,
    \]
    where we applied the Gauss-\hsk Bonnet theorem to say that the area of $\partial M_k$ with respect to $\I_k$ is equal to $- 2 \pi k^{-1} \abs{\chi(\partial M)}$.
    
    The only ingredient left to prove is relation \eqref{eq:decomposition_wp_norm}. For this computation, we will use the \emph{Bochner's formula} (see e. g. \cite[page 223]{lee2018introduction}):
    \begin{equation} \label{eq:bochner}
        \frac{1}{2} \Delta_g \norm{\dd{f}}_g^2 = \norm{{}^g \nabla^2 f}_g^2 + g(\grd_g f, \grd_g \Delta_g f) + \Ric_g (\grd_g f, \grd_g f) ,
    \end{equation}
    and the following expressions:
    \begin{gather}
        \divr_g (f X) = g(\grd_g f, X) + f \divr_g X , \label{eq:div1} \\
        \frac{1}{2} \scall{\Dlie_X g}{T}_g = - (\divr_g T)(X) + \divr_g Y , \label{eq:div2}
    \end{gather}
    where $X$ is a tangent vector field, $f$ is a smooth function, $T$ is a symmetric $2$-\hsk tensor, and $Y = T(X, \cdot)^\sharp$ is the vector field defined by requiring that $g(Y,Z) = T(X, Z)$ for all vector fields $Z$. From now on, we will omit everywhere the dependence of the connections, norms, gradients, and the Laplace-\hsk Beltrami operator on the Riemannian metric $g$, and everything has to be interpreted as associated to $g = \I_k$. Observe also that the Levi-\hsk Civita connection of $\I_k$ and $h_k$ are equal, since these metrics differ by the multiplication by a constant and, in particular, the $h_k$- and $\I_k$-\hsk Hessians coincide. Then we have:
    \begin{align}
        \begin{split}
            \norm{\II_k - \nabla^2 u_k + u_k \, h_k}^2 & = \norm{\II_k - \nabla^2 u_k - k\, u_k \, \I_k}^2 \\
            & = \norm{\II_k}^2 + \norm{\nabla^2 u_k}^2 + k^2 \, u_k^2 \, \norm{\I_k}^2 -  2 \scall{\II_k}{\nabla^2 u_k} + \\
            & \qquad \qquad \qquad \qquad - 2 k \, u_k \scall{\II_k}{\I_k} + 2 k \, u_k \scall{\nabla^2 u_k}{\I_k} .
        \end{split} \label{eq:norm_square}
    \end{align}
    First we focus our attention on the terms $\norm{\nabla^2 u_k}^2$ and $\scall{\II_k}{\nabla^2 u_k}$. In order to simplify the notation, we say that two functions $a$ and $b$ on $\partial M_k$ are equal "modulo divergence", and we write $a \equiv_{\divr} b$, if their difference coincides with the divergence of some smooth vector field. Then we have:
    \begin{align*}
        \norm{\nabla^2 u_k}^2 & = \frac{1}{2} \Delta \norm{\dd{u_k}}^2 - \scal{\grd u_k}{\grd \Delta u_k} - k \norm{\dd{u_k}}^2 \tag{relation \eqref{eq:bochner}} \\
        & \equiv_{\divr} - \scal{\grd u_k}{\grd \Delta u_k} - k \norm{\dd{u_k}}^2 \tag{$\Delta_g f = \divr_g \grd_g f$} \\
        & = - \divr(\Delta u_k \grd u_k) + (\Delta u_k)^2 - k \norm{\dd{u_k}}^2 \tag{relation \eqref{eq:div1}} \\
        & \equiv_{\divr} (\Delta u_k)^2 - k \divr(u_k \grd u_k) + k u_k \Delta u_k \tag{relation \eqref{eq:div1}} \\
        & \equiv_{\divr} \Delta u_k (\Delta u_k + k u_k) , 
    \end{align*}
    
    \begin{align*}
        \scall{\II_k}{\nabla^2 u_k} & = \frac{1}{2} \scall{\II_k}{\Dlie_{\grd u_k} \I_k} \tag{$\Dlie_{\grd_g f} \, g = 2 \, {}^g \nabla^2 f$} \\
        & \equiv_{\divr} - (\divr \II_k) (\grd u_k) \tag{relation \eqref{eq:div2}} \\
        & = - \scal{\grd H_k}{\grd u_k} \tag{$\divr \II_k = \dd{H_k}$} \\
        & = - \divr(H_k \grd u_k) + H_k \Delta u_k \tag{relation \eqref{eq:div1}} \\
        & \equiv_{\divr} H_k \Delta u_k .
    \end{align*}
    The other terms in equation \eqref{eq:norm_square} are simpler to handle. In particular we have:
    \begin{align*}
        \norm{\II_k}^2 & = H_k^2 - 2(k + 1) , \\
        \norm{\I_k}^2 & = 2 , \\
        \scall{\II_k}{\I_k} & = H_k , \\
        \scall{\nabla^2 u_k}{\I_k} & = \Delta u_k .
    \end{align*}
    Replacing all the relations we found in equation \eqref{eq:norm_square}, we obtain:
    \begin{align*}
        \norm{\II_k - \nabla^2 u_k + u_k h_k}^2 & \equiv_{\divr} H_k^2 - 2 (k + 1) + \Delta u_k (\Delta u_k + k u_k) + 2 k^2 u_k^2  + \\
        & \qquad \qquad \qquad \qquad - 2 H_k \Delta u_k - 2 k u_k H_k + 2 k u_k \Delta u_k \\
        & = H_k^2 - 2 (k + 1) + 2 k^2 u_k^2 - 2 k u_k H_k +  \\
        & \qquad \qquad \qquad \qquad + \Delta u_k ( \Delta u_k + 3 k u_k - 2 H_k)
    \end{align*}
    Finally, by replacing the expression of $\Delta u_k = \Delta_{\I_k} u_k$ from equation \eqref{eq:pde} in the equality above, we find that:
    \[
    \norm{\II_k - \nabla^2 u_k + u_k h_k}^2 \equiv_{\divr} k u_k H_k - 2(k + 1) ,
    \]
    which is equivalent to relation \eqref{eq:decomposition_wp_norm}.
\end{proof}

Since the Weil-\hsk Petersson metric of the Teichm\"uller space is non-\hsk complete, a control from above of the quantity $\norm{\dd{V_k^*}}_\WP$ would not suffice to guarantee the existence of the flow for every time. For this purpose, we rather study the $L^\infty$-\hsk norm of the Beltrami differentials equivalent to $\grd_\WP V_k^*$, which gives a control with respect to the Teichm\"uller metric (that is complete). At this point, the estimates determined in Lemma \ref{lem:estimates_uk} will play an essential role.

\begin{proposition} \label{prop:bound_teichmuller_norm}
    There exists a constant $D_k > 0$ depending only on the intrinsic curvature $k \in (-1,0)$ such that
    \[
    \norm{\grd_\WP V_k^*}_\Teich \leq D_k ,
    \]
    where $\norm{\cdot}_\Teich$ denotes the Teichm\"uller norm on $T \Teich(\partial M_k)$.
\end{proposition}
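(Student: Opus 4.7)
The plan is to reduce the statement to a pointwise bound on the harmonic representative of $\grd_\WP V_k^*$, which Proposition \ref{prop:gradient_dual_volume} already expresses in terms of quantities we control.

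First, by Proposition \ref{prop:gradient_dual_volume}, $\grd_\WP V_k^*$ is represented inside $S^{tt}_2(\partial M_k, h_k)$ by the tensor $2 \Re \phi_k$ with
\[
\Re \phi_k = \II_k - {}^k \nabla^2 u_k + u_k \, h_k .
\]
By the Teichm\"uller-norm inequality of Lemma \ref{lemma:relations_between_norms}, applied to $\dot{m} = \grd_\WP V_k^*$ and $\phi = \phi_k$, it then suffices to bound $\sup_{\partial M_k} \norm{\Re \phi_k}_{h_k}$ by a constant depending only on $k$.

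Next, I would estimate the three summands separately via the triangle inequality $\norm{\Re \phi_k}_{h_k} \leq \norm{\II_k}_{h_k} + \norm{{}^k \nabla^2 u_k}_{h_k} + \abs{u_k} \norm{h_k}_{h_k}$. Using $h_k = (-k)\I_k$ and the scaling relation \eqref{eq:2}, together with the identity $\norm{\II_k}_{\I_k}^2 = H_k^2 - 2(k+1)$ (already used in the proof of Lemma \ref{lem:lower_bound_gradient}), one gets
\[
\norm{\II_k}_{h_k}^2 = k^{-2} \bigl( H_k^2 - 2(k + 1) \bigr) ,
\]
which is controlled by Proposition \ref{prop:bounds_mean_curvature} (the $\mathscr{C}^0$-bound on $H_k$ in terms only of $k$, valid under the incompressibility assumption). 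The quantity $\norm{h_k}_{h_k}^2 = \tr(\mathds{1}) = 2$ is just a dimensional constant, while $\abs{u_k}$ is uniformly bounded in terms of $k$ by the first part of Lemma \ref{lem:estimates_uk}. Finally, the Hessian term $\max_{\partial M_k} \norm{{}^k \nabla^2 u_k}_{h_k}$ is bounded by the constant $C_k$ produced in the second part of Lemma \ref{lem:estimates_uk} (after rescaling $h_k = (-k) \I_k$, which only changes the bound by a factor of $k$).

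Combining these uniform bounds gives $\sup_{\partial M_k} \norm{\Re \phi_k}_{h_k} \leq \sqrt{2}\, D_k$ for a constant $D_k$ depending only on $k \in (-1,0)$, and therefore $\norm{\grd_\WP V_k^*}_\Teich \leq D_k$ by Lemma \ref{lemma:relations_between_norms}. There is no real obstacle here since every ingredient is already in place; the only subtlety worth checking is that the incompressibility hypothesis on $\partial M$ enters precisely through Proposition \ref{prop:bounds_mean_curvature} and the corresponding uniform Hessian bound in Lemma \ref{lem:estimates_uk}, both of which require each lifted component of $\partial M_k$ to be an embedded $k$-surface asymptotic to a Jordan curve in $\partial_\infty \Hyp^3$.
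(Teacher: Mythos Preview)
Your proposal is correct and follows essentially the same route as the paper: represent $\grd_\WP V_k^*$ by $2\Re\phi_k$ via Proposition \ref{prop:gradient_dual_volume}, reduce to a pointwise bound on $\norm{\Re\phi_k}_{h_k}$ via Lemma \ref{lemma:relations_between_norms}, and control the three summands using Proposition \ref{prop:bounds_mean_curvature} and Lemma \ref{lem:estimates_uk}. The only minor remark is that in Lemma \ref{lem:estimates_uk} the Hessian bound $\norm{{}^k\nabla^2 u_k}_k \leq C_k$ is already stated with respect to $h_k$, so no rescaling is needed there.
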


\begin{proof}
    Let $m_k$ be a point of the Teichm\"uller space $\Teich(\partial M_k)$ and let $h_k$ be a hyperbolic metric in the isotopy class $m_k$. In Proposition \ref{prop:gradient_dual_volume}, we showed that the vector field $\grd_\WP V_k^*$ at a point $m_k \in \Teich(\partial M_k)$ is represented by the transverse traceless tensor $2 \Re \phi_k \in S^{tt}_2(\partial M_k, h_k)$. 
    Therefore by Lemma \ref{lemma:relations_between_norms} we have
    \[
    \norm{\grd_\WP V_k^*}_\Teich \leq \frac{1}{\sqrt{2}} \sup_{\partial M_k} \norm{\Re \phi_k}_{h_k} .
    \]
    Therefore it is enough to show that the norm $\norm{\II_k - {}^k \nabla^2 u_k + u_k \, h_k}_{h_k}$ is uniformly bounded by a constant depending only on $k$. The norm of $\II_k$ is equal to $- k^{-1} \sqrt{H_k^2 - 2(k + 1)}$, and $\norm{u_k \, h_k}_{h_k} = \sqrt{2} \, \abs{u_k}$. Therefore we have
    \[
    \norm{\II_k - {}^k \nabla^2 u_k + u_k \, h_k}_{h_k} \leq - k^{-1} \sqrt{\norm{H_k}^2_{\mathscr{C}^0} - 2 (k + 1)} + \norm{{}^k \nabla^2 u_k}_{h_k} + \sqrt{2} \, \norm{u_k}_{\mathscr{C}^0} .
    \]
    Our assertion is now an immediate consequence of Proposition \ref{prop:bounds_mean_curvature} and of Lemma \ref{lem:estimates_uk}.
\end{proof}

\begin{corollary} \label{cor:flow_always_def}
    The flow $\Theta_t$ of the vector field $- \grd_\WP V_k^*$ over $\Teich(\partial M_k)$ is defined for all times $t \in \R$.
\end{corollary}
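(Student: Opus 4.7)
The plan is to combine the uniform Teichm\"uller-norm bound of Proposition \ref{prop:bound_teichmuller_norm} with the well-known completeness of the Teichm\"uller metric on $\Teich(\partial M_k)$ (due to Teichm\"uller) via the standard escape argument for ODEs on manifolds.

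First I would note that $\grd_\WP V_k^*$ is at least $\mathscr{C}^1$ on $\Teich(\partial M_k)$: the function $V_k^*$ depends smoothly on the hyperbolic structure $m_k$ (through $T_k^{-1}$, which is a $\mathscr{C}^1$-diffeomorphism by Theorem \ref{thm:ksurfaces_homeo}, and the smooth dependence of the solution $u_k$ of the elliptic equation \eqref{eq:pde} on the coefficients). Consequently, for every initial datum $m_k \in \Teich(\partial M_k)$, the Picard-Lindel\"of theorem on manifolds produces a unique maximal integral curve $t \mapsto \Theta_t(m_k)$ defined on some open interval $(\alpha, \beta) \ni 0$.

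Next I would argue by contradiction, assuming $\beta < +\infty$ (the case $\alpha > -\infty$ is symmetric). For any $0 \leq s < t < \beta$, the integral curve together with Proposition \ref{prop:bound_teichmuller_norm} yields
\[
d_\Teich(\Theta_s(m_k), \Theta_t(m_k)) \leq \int_s^t \norm{\grd_\WP V_k^*(\Theta_\tau(m_k))}_\Teich \dd{\tau} \leq D_k \, (t - s) .
\]
This shows that $(\Theta_t(m_k))_{t \to \beta^-}$ is a Cauchy net with respect to the Teichm\"uller distance. Since $\Teich(\partial M_k)$ is complete for $d_\Teich$, the limit $m_\infty \defin \lim_{t \to \beta^-} \Theta_t(m_k)$ exists in $\Teich(\partial M_k)$.

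Finally, applying Picard-Lindel\"of at $m_\infty$ provides a local integral curve defined on an interval $(\beta - \varepsilon, \beta + \varepsilon)$ for some $\varepsilon > 0$, and concatenation with $\Theta_t(m_k)$ (using uniqueness of solutions) extends the flow past $\beta$, contradicting the maximality of $(\alpha, \beta)$. The only genuinely non-routine step is obtaining the Teichm\"uller-norm bound $D_k$, which has already been established in Proposition \ref{prop:bound_teichmuller_norm}; everything else is a direct application of completeness and standard ODE theory.
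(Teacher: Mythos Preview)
Your proposal is correct and follows exactly the route the paper takes: the paper's own proof simply invokes completeness of the Teichm\"uller distance together with the bound from Proposition~\ref{prop:bound_teichmuller_norm}, and you have spelled out the standard escape/Cauchy argument that underlies that sentence.
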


\begin{proof}
    The assertion follows from the fact that the Teichm\"uller distance is complete, and on the bound shown in Proposition \ref{prop:bound_teichmuller_norm}.
\end{proof}

The last ingredient that we will need for the proof of Theorem \ref{THM:INF_DUAL_VOLUME} is the existence of some lower bound for the dual volume function $V_k^*$. To do so, we will make use of the properties of the dual volume proved in \cite{mazzoli2018the_dual}, and of an upper bound for the length of the bending measure of the boundary of the convex core of a convex co-\hsk compact manifold with incompressible boundary, whose existence has been first proved by Bridgeman \cite{bridgeman1998average}, and it has been improved in later works (see \cite{bridgeman_canary2005bounding}). We will make use of the best result currently known in this direction for convex co-\hsk compact manifolds with incompressible boundary, which is due to Bridgeman, Brock, and Bromberg \cite{bridgeman_brock_bromberg2017}.

\begin{lemma} \label{lem:lower_bound_Vk*}
	For every $k \in (-1,0)$ and for every convex co-\hsk compact hyperbolic $3$-\hsk manifold $M$ with incompressible boundary we have:
	\[
	V_k^*(M) \geq F(k, \chi(\partial M)) ,
	\]
	where $F$ is an explicit function of the curvature $k \in (-1,0)$ and the Euler characteristic of $\partial M$.
\end{lemma}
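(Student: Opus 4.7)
The plan is to compare $V_k^*(M)$ with the dual volume of the convex core $V_\CC^*(M)$ by integrating along the $s$-surface foliation, and then to bound $V_\CC^*(M)$ from below via the Bridgeman-Brock-Bromberg upper bound on the length of the bending measure. This will yield a lower bound depending only on $k$ and $\chi(\partial M)$.

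First I would establish the identity
\[
V_k^*(M) \;=\; V_\CC^*(M) \;-\; (k+1)\,V(M_k) \;+\; \int_{-1}^{k} V(M_s)\,\dd{s}.
\]
To derive it, apply the smooth differential Schl\"afli formula of Rivin-Schlenker to the $1$-parameter family of convex regions $M_s$ bounded by the $s$-surfaces. Since the extrinsic curvature of an $s$-surface satisfies $K_e = s+1$, a direct computation shows $\tfrac{\dd V_s^*}{\dd s} = -(s+1)\,\tfrac{\dd V(M_s)}{\dd s}$. Integrating from $-1$ to $k$ and applying integration by parts, together with the continuity $V_s^*(M) \to V_\CC^*(M)$ as $s \to -1^+$ (a consequence of the convex core approximation property established in \cite{mazzoli2018the_dual}, since $M_s \to \CC M$), yields the identity. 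Because $V(M_s) \geq V(\CC M) \geq 0$ for every $s \in (-1, k]$, one concludes
\[
V_k^*(M) \;\geq\; V_\CC^*(M) \;-\; (k+1)\,\bigl(V(M_k) - V(\CC M)\bigr).
\]

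Next, the Bridgeman-Brock-Bromberg upper bound $\length_m(\mu) \leq C_0 \abs{\chi(\partial M)}$ (valid for convex co-compact manifolds with incompressible boundary) immediately gives $V_\CC^*(M) \geq -\tfrac{C_0}{2}\abs{\chi(\partial M)}$. To control the collar volume $V(M_k)-V(\CC M)$, I would invoke the compactness criterion for $k$-surfaces (Proposition \ref{prop:compactness_ksurfaces}) to produce a universal upper bound $d(k)$, depending only on $k$, on the normal distance from $\partial \CC M$ to the $k$-surface $\partial M_k$. Together with the Gauss-Bonnet identity $\Area(\partial M_k, \I_k) = 2\pi \abs{\chi(\partial M)} / \abs{k}$, this controls $V(M_k \setminus \CC M)$ by an explicit multiple of $\abs{\chi(\partial M)}$ depending only on $k$, and combining the estimates produces the desired explicit function $F(k, \chi(\partial M))$.

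The main obstacle is this last universal distance bound between $\partial \CC M$ and $\partial M_k$. While it is clearly finite for each fixed $M$, showing that it is controlled by a constant that depends only on $k$ (and not on the hyperbolic structure of $M \in \QD(M)$) requires a compactness argument that combines Proposition \ref{prop:compactness_ksurfaces} with the convergence of the $s$-surfaces to the pleated boundary of the convex core as $s \to -1^+$; the uniform bound on the mean curvature from Proposition \ref{prop:bounds_mean_curvature} provides the necessary control on the geometry of the foliating $s$-surfaces for $s$ close to $-1$.
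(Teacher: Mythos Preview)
Your approach is correct but considerably more elaborate than necessary, and the step you flag as the ``main obstacle'' is in fact trivial once you use the right tool.

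Your identity $V_k^*(M) = V_\CC^*(M) - (k+1)V(M_k) + \int_{-1}^{k} V(M_s)\,\dd s$ and the resulting estimate $V_k^*(M) \geq V_\CC^*(M) - (k+1)\bigl(V(M_k)-V(\CC M)\bigr)$ are valid: the relation $\tfrac{\dd}{\dd s}V_s^* = -(s+1)\tfrac{\dd}{\dd s}V(M_s)$ follows from the Rivin--Schlenker formula since under normal flow $\dot\I = 2f\,\II$ and $\scall{\dot\I}{\II - H\I}_\I = -4f K_e = -4f(s+1)$. However, the uniform distance bound you treat as the crux does \emph{not} require compactness: the geometric maximum principle (comparing $\partial M_k$ with the umbilical equidistant surfaces from $\partial\CC M$, whose principal curvatures are $\tanh\varepsilon$) gives directly and explicitly that $M_k \subset N_{\varepsilon_k}\CC M$ with $\varepsilon_k = \arctanh\sqrt{k+1}$. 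Invoking Proposition~\ref{prop:compactness_ksurfaces} here is overkill and would lose the explicitness demanded by the statement.

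The paper's proof bypasses your integration-by-parts argument entirely. Once one has $M_k \subset N_{\varepsilon_k}\CC M$, monotonicity of the dual volume under inclusion (proved in \cite{mazzoli2018the_dual}) yields $V_k^*(M) \geq V^*(N_{\varepsilon_k}\CC M)$ in one step. The right-hand side has the explicit expression
\[
V^*(N_\varepsilon\CC M) = V(\CC M) - \tfrac{1}{4}\length_m(\mu)(\cosh 2\varepsilon + 1) - \tfrac{\pi}{2}\abs{\chi(\partial\CC M)}(\sinh 2\varepsilon - 2\varepsilon),
\]
and then dropping $V(\CC M)\geq 0$ and applying the Bridgeman--Brock--Bromberg bound $\length_m(\mu)\leq 6\pi\abs{\chi(\partial M)}$ finishes the job. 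Your route would ultimately need the same two ingredients (the max-principle containment and an explicit formula for neighborhood volumes) to control $V(M_k)-V(\CC M)$, so the detour through the foliation identity buys nothing here.
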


\begin{proof}
	Since the $k$-\hsk surfaces foliate the complementary of the convex core $\CC M$, a simple application of the geometric maximum principle (see for instance \cite[Lemme~2.5.1]{labourie2000une_lemme}) shows that the $k$-\hsk surface $\partial M_k$ is contained in $\neigh_{\varepsilon_k} \CC M$, the $\varepsilon_k$-\hsk neighborhood of the convex core $\CC M$, for $\varepsilon_k = \arctanh \sqrt{k + 1}$. The dual volume of a convex set is a decreasing function with respect to the inclusion (see \cite[Proposition 2.6]{mazzoli2018the_dual} for a proof of this assertion), therefore the quantity $V_k^*(M)$ is bounded from below by the dual volume of the $\varepsilon_k$-\hsk neighborhood of the convex core. It is not difficult to show that for every $\varepsilon > 0$ we have
	\[
	V^*(\neigh_\varepsilon \CC M) = V(\CC M) - \frac{\length_m(\mu)}{4} (\cosh 2 \varepsilon + 1) - \frac{\pi}{2} \abs{\chi(\partial \CC M)} (\sinh 2 \varepsilon - 2 \varepsilon) ,
	\]
	where $\length_m (\mu)$ denotes the length of the bending measured lamination on the boundary of the convex core of $M$ (see e. g. \cite[Proposition 2.4]{mazzoli2018the_dual}). By \cite[Theorem 2.16]{bridgeman_brock_bromberg2017}, the term $\length_m (\mu)$ is less or equal to $6 \pi \abs{\chi(\partial M)}$. Combining these observations, we deduce that
	\begin{align*}
		V_k^*(M) & \geq V^*(\neigh_{\varepsilon_k} \CC M) \\
		& = V(\CC M) - \frac{\length_m(\mu)}{4} (\cosh 2 \varepsilon_k + 1) - \frac{\pi}{2} \abs{\chi(\partial \CC M)} (\sinh 2 \varepsilon_k - 2 \varepsilon_k) \\
		& \geq - \frac{\length_m(\mu)}{4} (\cosh 2 \varepsilon_k + 1) - \frac{\pi}{2} \abs{\chi(\partial \CC M)} (\sinh 2 \varepsilon_k - 2 \varepsilon_k) \\
		& \geq - \frac{\pi}{2} \abs{\chi(\partial M)} (3 \cosh \varepsilon_k + 3 + \sinh 2 \varepsilon_k - 2 \varepsilon_k) ,
	\end{align*}
	which proves the desired inequality.
\end{proof}

\section{The proof of Theorem \ref{THM:INF_DUAL_VOLUME}} \label{sec:proof_thmA}

This section is dedicated to the proof of the main theorem of our exposition, and to the proof of the optimality of the multiplicative constant appearing in \eqref{eq:lower_bound_volume}.

\begin{proof}[Proof of Theorem \ref{THM:INF_DUAL_VOLUME}]
    Let $M$ be a convex co-\hsk compact hyperbolic $3$-\hsk manifold with incompressible boundary. We denote by $M_t \defin \Theta_t(M)$ the hyperbolic $3$-\hsk manifold obtained by following the flow of the vector field $- \grd_\WP V_k^*$, which is defined for every $t \in \R$ in light of Corollary \ref{cor:flow_always_def}. In order to simplify the notation, we will continue to denote by $V_k^*$ the $k$-\hsk dual volume as a function over the space of quasi-\hsk isometric deformations of $M$. This abuse is justified by the fact that, for every $k \in (-1,0)$, a convex co-\hsk compact manifold is uniquely determined by the hyperbolic structures on its $k$-\hsk surfaces (see Theorem \ref{thm:ksurfaces_homeo}). We have
    \begin{align*}
        V_k^*(M) - V_k^*(M_t) = \int_0^t \norm{\dd V_k^*}^2_{M_s} \dd{s} .
    \end{align*}
    By Lemma \ref{lem:lower_bound_Vk*}, the left hand side of the relation is bounded from above with respect to $t$. In particular, the integral on the right side has to converge as $t$ goes to $+ \infty$. Therefore we can find an unbounded increasing sequence $(t_n)_n$ for which the Weil-\hsk Petersson norm $\norm{\dd{V_k^*}}^2$ evaluated at $M_{t_n}$ goes to $0$ as $n$ goes to $\infty$. Then, by Lemma \ref{lem:lower_bound_gradient}, we have
    \[
    \limsup_{n \to \infty} \int_{\partial M_{t_n, k}} H_k \dd{a_{\I_k}} \leq - 4 \pi k^{-1} \sqrt{k + 1} \abs{\chi(\partial M)} ,
    \]
    where $M_{t_n, k}$ stands for $(M_{t_n})_k$, the region of $M_{t_n}$ enclosed by its $k$-\hsk surfaces. Therefore we deduce:
    \begin{align*}
        V_k^*(M) & \geq \lim_{n \to \infty} V_k^*(M_{t_n}) = \lim_{n \to \infty} \left( V_k(M_{t_n}) - \frac{1}{2} \int_{\partial M_{t_n. k}} H_k \dd{a_{\I_k}} \right) \\
        & \geq \inf_{M' \in \QD(M)} V_k(M') - \frac{1}{2} \limsup_{n \to \infty} \int_{\partial M_{t_n, k}} H_k \dd{a_{\I_k}} \\
        & \geq \inf_{M' \in \QD(M)} V_k(M') + 2 \pi k^{-1} \sqrt{k + 1} \abs{\chi(\partial M)} ,
    \end{align*}
    where $V_k(M')$ denotes the Riemannian volume of the region $M_k'$ of $M'$ enclosed by its $k$-\hsk surface. Observe that the term $2 \pi k^{-1} \sqrt{k + 1} \abs{\chi(\partial M)}$ is equal to $- \frac{1}{2} \int_{\partial M_k'} H_k \dd{a_{\I_k}}$ when the boundary of the convex core of $M'$ is totally geodesic.
    
    Finally, by taking the limit as $k$ goes to $(- 1)^+$, we obtain that $V_\CC^*(M) \geq \inf_{M'} V_\CC(M')$ for every convex co-\hsk compact structure $M$. This proves that
    \[
    \inf_{\QD(M)} V_\CC^* \geq \inf_{\QD(M)} V_\CC .
    \]
    On the other hand, the dual volume $V_\CC^*(M) \defin V_\CC(M) - \frac{1}{2} \length_m(\mu)$ is always smaller or equal to $V_\CC(M)$, so the other inequality between the infima is clearly satisfied.
    
    If $V_\CC^*(M) = V_\CC(M)$, then the length of the bending measured lamination $\mu$ of the convex core of $M$ has to vanish, therefore $\mu = 0$ or, in other words, $\partial \CC M$ is totally geodesic.
\end{proof}

\begin{corollary} \label{cor:optimal_constant}
For every quasi-\hsk Fuchsian manifold $M$ we have $V_\CC(M) \geq \frac{1}{2} \length_m(\mu)$, where $m = m(M)$ and $\mu = \mu(M)$ denote the hyperbolic metric and the bending measure of the boundary of the convex core of $M$, respectively. Moreover, for every positive $\varepsilon$ and for every neighborhood $U$ of a Fuchsian manifold $M_0$ inside $\QD(M_0) = \QD(M)$, there exists a quasi-\hsk Fuchsian manifold $M_\varepsilon$ in $U$ that satisfies $V_\CC(M_\varepsilon) < (\frac{1}{2} + \varepsilon) \length_{m_\varepsilon}(\mu_\varepsilon)$, where $m_\varepsilon = m(M_\varepsilon)$ and $\mu_\varepsilon = \mu(M_\varepsilon)$.
\end{corollary}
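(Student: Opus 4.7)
For the first assertion I would apply Theorem A directly. When $M$ is quasi-Fuchsian, $\QD(M)$ coincides with the full quasi-Fuchsian space and in particular contains the Fuchsian locus. At any Fuchsian point $M_0 \in \QD(M)$, the convex core degenerates to a single totally geodesic surface, so $V_\CC(M_0) = 0$; combined with the non-negativity of $V_\CC$, this gives $\inf_{M' \in \QD(M)} V_\CC(M') = 0$. Theorem A then yields $\inf_{M' \in \QD(M)} V_\CC^*(M') = 0$, so $V_\CC^*(M) \geq 0$ for every $M \in \QD(M)$, which unravels via $V_\CC^*(M) = V_\CC(M) - \tfrac{1}{2}\length_m(\mu)$ into the desired bound $V_\CC(M) \geq \tfrac{1}{2}\length_m(\mu)$.

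For the optimality of the constant $\tfrac{1}{2}$, the plan is to exhibit, for any neighborhood $U$ of a Fuchsian point $M_0$, a differentiable one-parameter family $(M_t)_{t \geq 0} \subset \QD(M_0) = \QD(M)$ with $M_t \to M_0$ as $t \to 0^+$, for which $V_\CC^*(M_t) = o(t)$ while $\length_{m_t}(\mu_t) = \Theta(t)$. A natural choice is the bending deformation of $M_0$ along a simple closed geodesic $\gamma$ on its totally geodesic boundary: by Bonahon's analysis of the bending measure near the Fuchsian locus, the resulting path of quasi-Fuchsian manifolds has bending measure $\mu_t$ asymptotic to $t \, \delta_\gamma$, so that $\length_{m_t}(\mu_t) = t \length_{m_0}(\gamma) + o(t)$.

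The key step is then to show that the differential of $V_\CC^*$ vanishes at the Fuchsian locus, which yields the desired $V_\CC^*(M_t) = o(t)$. This follows directly from the dual Bonahon-Schl\"afli formula recalled in Section \ref{subsec:dual_volume}: at a Fuchsian $M_0$ one has $\mu_0 = 0$, so the length function $L_{\mu_0} \colon \Teich(\partial \CC M_0) \to \R$ is identically zero, and hence
\[
\dd{V_\CC^*}|_{M_0}(\dot{M}) = -\tfrac{1}{2}\,\dd{L_{\mu_0}}|_{m_0}(\dot{m}) = 0
\]
for every tangent vector $\dot{M}$. Together with $V_\CC^*(M_0) = 0$, a first-order Taylor expansion gives $V_\CC^*(M_t) = o(t)$, hence
\[
\frac{V_\CC(M_t)}{\length_{m_t}(\mu_t)} = \frac{1}{2} + \frac{V_\CC^*(M_t)}{\length_{m_t}(\mu_t)} \longrightarrow \frac{1}{2} \quad \text{as } t \to 0^+.
\]
Choosing $t_\varepsilon > 0$ small enough so that $M_{t_\varepsilon} \in U$ and the ratio above is smaller than $\tfrac{1}{2} + \varepsilon$ produces the required $M_\varepsilon := M_{t_\varepsilon}$.

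The main technical obstacle is the careful justification of the linear asymptotic $\length_{m_t}(\mu_t) = \Theta(t)$ along the bending deformation. This relies on Bonahon's continuity and differentiability properties of the bending measure of the convex core as a function of the quasi-Fuchsian structure, together with the continuity of the length functional on measured laminations; everything else in the argument is an essentially formal application of Theorem A and of the dual Bonahon-Schl\"afli formula.
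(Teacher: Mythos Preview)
Your proposal is correct and follows essentially the same strategy as the paper's proof: apply Theorem~A together with $\inf V_\CC = 0$ on the Fuchsian locus for the inequality, and for optimality take a one-parameter bending deformation out of a Fuchsian point, use that $\dd V_\CC^*$ vanishes there while $\length_{m_t}(\mu_t)$ grows linearly in $t$. The only differences are cosmetic: the paper packages the second part via the auxiliary function $f_\varepsilon(t) = V_\CC^*(\alpha(t)) - \varepsilon\,\length_{m_t}(\mu_t)$ and shows $f_\varepsilon'(0^+) < 0$, while you compute the ratio $V_\CC/\length$ directly; and the paper justifies $\dd V_\CC^*|_{M_0} = 0$ by observing that $M_0$ is a minimum of a $\mathscr{C}^1$ function, whereas you read it off the dual Bonahon--Schl\"afli formula with $\mu_0 = 0$ (which is arguably more direct). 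For the linear asymptotic of $\length_{m_t}(\mu_t)$, the paper invokes the derivative formula from \cite[equation~(4)]{krasnov2009symplectic}, which gives exactly the $\ell_{m_0}(\dot\mu_0^+)$ term you need.
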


\begin{proof}
If $M$ is quasi-\hsk Fuchsian, the infimum of the volume of the convex core over the space of quasi-\hsk isometric deformations $\QD(M)$ is equal to $0$, and it is realized on the Fuchsian locus.

For the second part of the statement, consider $M_0$ a Fuchsian manifold whose convex core is a totally geodesic surface homeomorphic to $\Sigma$ with hyperbolic metric $m_0$. Let $\mappa{\alpha}{[0,1]}{\QD(M)}$ be a path starting at $\alpha(0) = M_0$ and for which the right derivative of the bending measure $\dot{\mu}_0^+$ exists and it is equal to a non-\hsk zero measured lamination on $\Sigma \sqcup \Sigma$. A fairly explicit way to produce such a path is to choose a measured lamination $\lambda \in \MesLam(\Sigma)$ and to consider the deformation of $M_0$ given by the holonomies of pleated surfaces with bending H\"older cocycle equal to $t \lambda$ and hyperbolic metric $m_0$, as $t$ varies in $[0,1]$ (compare with \cite{bonahon1996shearing}). Then, for every $\varepsilon > 0$ we define
\[
f_\varepsilon(t) \defin V_\CC(\alpha(t)) - \left( \frac{1}{2} + \varepsilon \right) \length_{m_t}(\mu_t) = V_\CC^*(\alpha(t)) - \varepsilon \, \length_{m_t}(\mu_t) , \quad t \in [0,1],
\]
where $m_t = m(M_t)$ and $\mu_t = \mu(M_t)$ denote the hyperbolic metric and the bending measure of the boundary of the convex core of $M_t = \alpha(t)$. As shown in \cite[equation (4)]{krasnov2009symplectic}, we have
\[
\left. \dv{t} \length_{m_t}(\mu_t) \right|_{t = 0^+} = \dd{(L_{\mu_0})}(\dot{m}_0) + \ell_{m_0}(\dot{\mu}_0^+) = \ell_{m_0}(\dot{\mu}_0^+) ,
\]
where we are using that $\mu_0 = 0$ (here $\mappa{L_{\mu_0}}{\Teich(\partial \CC M)}{\R}$ is the function that associates with every hyperbolic structure $m$ the length of the $m$-\hsk geodesic realization of $\mu_0$). Then
\begin{align*}
f_\varepsilon(t) & = f_\varepsilon(0) + f_\varepsilon'(0) \, t + o(t;\varepsilon) \\
& = 0 + ( \dd{({V_\CC}^*)_{M_0}}(v) - \varepsilon \, \ell_{m_0}(\dot{\mu}_0^+) ) t + o(t;\varepsilon) \\
& = - \varepsilon \, \ell_{m_0}(\dot{\mu}_0^+) \, t + o(t;\varepsilon) . \tag*{($V_\CC^* \in \mathscr{C}^1$ and $M_0$ minimum)}
\end{align*}
This proves that $f_\varepsilon(t) < 0$ for $t$ sufficiently small (depending on $\varepsilon$), and therefore the existence of a quasi-\hsk Fuchsian manifold $M_\varepsilon$ satisfying the desired properties.
\end{proof}

\emergencystretch=1em

\printbibliography[heading=bibintoc, title={Bibliography}]

\end{document}